\documentclass[reqno]{amsart}
\usepackage[margin=1.5in]{geometry}
\usepackage{palatino, mathpazo}
\usepackage{amsmath, amssymb, amsthm, mathrsfs}
\usepackage{tikz-cd}
\usepackage[colorlinks=true, allcolors=purple]{hyperref}

\newtheorem{thmx}{Theorem}

\newtheorem{theorem}{Theorem}[section]

\newtheorem{proposition}[theorem]{Proposition}
\newtheorem{corollary}[theorem]{Corollary}
\theoremstyle{definition}
\newtheorem{definition}[theorem]{Definition}
\newtheorem{remark}[theorem]{Remark}
\newtheorem{observation}[theorem]{Observation}

\newcommand{\bfem}[1]{\textbf{\textit{#1}}}
\newcommand{\LL}{\mathbb{L}}
\newcommand{\TT}{\mathbb{T}}
\newcommand{\Gm}{\mathbb{G}_m}
\newcommand{\cO}{\mathcal{O}}
\newcommand{\cL}{\mathcal{L}}
\newcommand{\cK}{\mathcal{K}}
\newcommand{\K}{\mathbb{K}}
\newcommand{\Map}{\mathrm{Map}}

\newcommand{\Spec}{\mathrm{Spec}}

\newcommand{\Hom}{\mathrm{Hom}}

\makeatletter
\g@addto@macro{\endabstract}{\@setabstract}
\newcommand{\authorfootnotes}{\renewcommand\thefootnote{\@fnsymbol\c@footnote}}%
\makeatother

\begin{document}

\title{AKSZ Construction for Shifted Contact Structures}

\author{Efe \.{I}zbudak}
\author{Kadri \.{I}lker Berktav}

\date{}

\begin{center}
  \LARGE
  AKSZ Construction for Shifted Contact Structures \par \bigskip

  \normalsize
  \authorfootnotes
  Efe \.{I}zbudak\footnote{efe.izbudak@studium.uni-hamburg.de}\textsuperscript{1}
  Kadri \.{I}lker Berktav\footnote{berktav@metu.edu.tr}\textsuperscript{2},
   \par \bigskip
  \textsuperscript{1}Department of Mathematics, Universit\"{a}t Hamburg\par 
  \textsuperscript{2}Department of Mathematics, METU\par
  \bigskip

\end{center}

\begin{abstract}
This paper establishes the AKSZ theorem for shifted contact structures and its applications. In brief, to resolve certain obstructions, we first define the quotient mapping stack as the quotient of the symplectified mapping space by the constant multiplicative group action. We then prove that if $X$ is an $n$-shifted contact derived Artin stack and $Y$ is an $\cO$-compact, $d$-oriented derived stack, the quotient mapping stack $$[\Map(Y, \widetilde{X})/\Gm]$$ admits an $(n-d)$-shifted contact structure. 

In addition, by formalizing the derived analogue of the graded contact AKSZ formalism, we also introduce the notion of \emph{weak shifted contact structures} in derived algebraic geometry and prove that under global trivialization of the contact line bundle, the unmodified mapping stack inherits a weak contact structure.

Extending this setup to spaces with boundary, we demonstrate that derived fillings naturally induce Legendrian morphisms between quotient mapping stacks, and that topological gluing of cobordisms evaluates to derived Legendrian intersections. Furthermore, we trace the transgression of the canonical shifted $1$-form to prove that our quotient mapping stacks satisfy the derived Classical Master Equation (CME).

As applications of our quotient mapping stack formalism, we define the derived analogues of specific topological field theories, including the Jacobi, Courant-Jacobi, and Loop Space Sigma Models. Next, we establish the contact analogue of the PTVV theorem for moduli spaces of perfect complexes. Finally, by composing this geometric construction with the perverse linearization in a companion paper, we elevate these moduli spaces to generate Cohomological Contact Extended Topological Field Theories.
\end{abstract}


\vspace{-.1in}

{\Small
\tableofcontents
}
\section{Introduction}

The AKSZ construction provides a formalism in derived algebraic geometry for generating shifted geometric structures on derived mapping stacks. Pantev, To\"{e}n, Vaqui\'{e}, and Vezzosi introduced this construction for shifted symplectic structures and proved that the derived mapping stack of a $d$-oriented stack into an $n$-shifted symplectic stack admits an $(n-d)$-shifted symplectic structure \cite[Theorem 2.1]{PTVV}. Calaque, Haugseng, and Scheimbauer generalized this construction to an extended topological field theory utilizing the higher category of Lagrangian correspondences \cite[Theorem 1.1]{CHS2025}. Tomi\'{c} adapted the AKSZ construction to shifted Poisson structures \cite[Theorem 3.4]{Tomic}.

Recently, Contreras, Martinez Alba, and Mehta developed an analogue of the AKSZ formalism for graded contact manifolds \cite[Theorem 4.1]{CMM}. Operating under the assumption of a globally trivializable contact line bundle, they demonstrated that the unmodified space of maps $\Map(\mathcal{N}, \mathcal{M})$ inherits a weak contact structure and provides solutions to the Classical Master Equation (CME) via the Jacobi bracket, yielding the Jacobi Sigma Model and Courant-Jacobi field theories in low dimensions.

In this paper, we transport the AKSZ-contact formalism into the setting of derived algebraic geometry and examine its applications concerning topological field theories. We first emphasize that a divergence occurs in the derived setting. Derived contact stacks carry non-trivial twisting line bundles. The fundamental weight 1 condition on the derived de Rham complex translates the Liouville scaling into an algebraic $\Gm$-action \cite[Section 2]{Calaque2019}. We prove that this topological twist algebraically obstructs the direct use of the unmodified mapping stack $\Map(Y, X)$. To resolve this and establish the derived contact AKSZ formalism, we define the quotient mapping stack as the quotient of the symplectified mapping space by the constant multiplicative group action.

We next promote our approach to the case of stacks with boundary structures. This is because, beyond the construction of the bulk space of fields, establishing a complete topological field theory requires geometric formulations of boundaries, locality, and integrability. To this end, we extend our formalism to stacks with a boundary and establish topological gluing, along with the transgression of homological data, to satisfy the Classical Master Equation.
As a further application of our approach, we also provide the contact PTVV theorem for moduli spaces of perfect complexes.
\medskip

Our main results are then summarized as follows.

\begin{thmx}[The Quotient Mapping Stack]
If $X$ is an $n$-shifted contact derived Artin stack and $Y$ is an $\cO$-compact, $d$-oriented derived stack, the quotient mapping stack $$X_Y = [\Map(Y, \widetilde{X})/\Gm]$$ admits an $(n-d)$-shifted contact structure (cf. Theorem \ref{thm:quotient_contact}).
\end{thmx}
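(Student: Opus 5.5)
The approach is to work through the symplectization. Presumably an $n$-shifted contact structure on $X$ amounts to a $\Gm$-equivariant $n$-shifted symplectic structure $\omega$ of weight $1$ on the symplectization $\widetilde{X}$ (the total space of the contact line bundle minus the zero section). By Calaque's weight-$1$ description of the derived de Rham complex, $\omega$ is exact with equivariant primitive the Liouville form $\lambda$. Conversely, a $\Gm$-stack $\widetilde{Z}\to Z=[\widetilde{Z}/\Gm]$ that is a $\Gm$-torsor carrying such a weight-$1$ equivariant shifted symplectic form gives $Z$ a shifted contact structure. So it suffices to exhibit $\Map(Y,\widetilde{X})$ as a $\Gm$-torsor over $X_Y$ with a weight-$1$, $\Gm$-equivariant $(n-d)$-shifted symplectic form.

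\textbf{Step 1.} Apply the PTVV AKSZ theorem \cite[Theorem 2.1]{PTVV}. Since $\widetilde{X}$ is $n$-shifted symplectic and $Y$ is $\cO$-compact and $d$-oriented, $\Map(Y,\widetilde{X})$ carries the $(n-d)$-shifted symplectic form $\int_{[Y]} ev^*\omega$. Here $ev\colon Y\times\Map(Y,\widetilde{X})\to\widetilde{X}$ is evaluation and $\int_{[Y]}$ is integration along the orientation. The same transgression applied to $\lambda$ gives a primitive $\int_{[Y]}ev^*\lambda$ of the transgressed form.

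\textbf{Step 2.} Let $\Gm$ act on $\Map(Y,\widetilde{X})$ through its action on the target, that is, by constant scalars. The evaluation map is then $\Gm$-equivariant, with trivial action on $Y$, and $\int_{[Y]}$ is $\Gm$-linear because the orientation lives on $Y$. Hence the transgressed closed form and its primitive are equivariant of weight $1$. To make this precise, I would run the PTVV transgression in the category of graded mixed complexes, so that weights are tracked functorially.

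\textbf{Step 3.} Check that $\Map(Y,\widetilde{X})\to[\Map(Y,\widetilde{X})/\Gm]$ is the correct torsor. The $\Gm$-action is free because it is free on $\widetilde{X}$ and scalars act pointwise, so the quotient is a derived Artin stack; it is locally of finite presentation by the usual Artin–Lurie argument for mapping stacks out of $\cO$-compact sources. Take the contact line bundle on $X_Y$ to be the one associated to this torsor. The contact form is then the descent of the weight-$1$ primitive, and its twisted kernel is the contact distribution.

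\textbf{Main obstacle.} I expect the hardest step to be verifying the contact nondegeneracy condition on $X_Y$ itself, rather than merely the symplecticity of the cover. Concretely, one has to show that the induced pairing on the kernel distribution $\mathcal{K}_{X_Y}$ is a quasi-isomorphism with the right shift and twist. My plan is to compare the tangent fibre sequence
\[
\cO\to\TT_{\Map(Y,\widetilde{X})}\to\pi^*\TT_{X_Y},
\]
where $\cO$ comes from the infinitesimal action, with the transgressed sequence $\Gamma(Y,ev^*\cO)\to\Gamma(Y,ev^*\TT_{\widetilde{X}})$. The gap between the constant $\cO$ and $\Gamma(Y,\cO_Y)$ is the subtle point; the earlier sections flag it as the obstruction that the quotient construction is designed to handle. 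I would handle it by working entirely upstairs: nondegeneracy of the weight-$1$ symplectic form together with freeness of the action is precisely the definition of contact for $X_Y$ via symplectization. This avoids a direct computation downstairs, at the cost of checking that this upstairs characterization agrees with the paper's definition of shifted contact structure.
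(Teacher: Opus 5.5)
Your proposal follows the paper's proof essentially step for step: you get the weight-$1$ $n$-shifted symplectic $\Gm$-torsor $\widetilde{X}\to X$ from \cite[Theorem 4.2]{IzbudakBerktav2026}, transgress it via PTVV to $\Map(Y,\widetilde{X})$, obtain weight $1$ from the $\Gm$-equivariance of $ev$ with trivial action on $Y$, and descend to $X_Y$ by the derived transversality theorem (Theorem~\ref{thm:A}), which is exactly the ``upstairs characterization'' you use to dispose of your main obstacle. The Liouville primitive and the freeness argument are not needed here; also, PTVV's AKSZ theorem assumes, rather than deduces from $\cO$-compactness, that $\Map(Y,\widetilde{X})$ is a locally finitely presented derived Artin stack, and the paper leaves this hypothesis implicit as well.
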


We formulate the concept of \textit{weak shifted contact structures} in derived algebraic geometry to fully capture the derived analogue of the graded contact AKSZ formalism. By formalizing the graded geometry approach under the assumption of a globally trivializable contact line bundle, we identify the homological obstruction on the unmodified mapping stack.

\begin{thmx}[Weak Structures]
Under the assumption that $\cL_X \simeq \cO_X$ for an $n$-shifted derived Artin stack $X$, the unmodified mapping stack $\Map(Y, X)$ from an $\cO$-compact, $d$-oriented derived stack $Y$ inherits only a weak shifted contact structure due to the Tor-amplitude of the source space's orientation class (cf. Theorem \ref{prop:weak_contact}).
\end{thmx}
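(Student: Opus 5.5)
The plan is to reduce the statement to the PTVV transgression machinery applied to the \emph{weak} data that survives when the contact line bundle is trivial. Under $\cL_X \simeq \cO_X$, an $n$-shifted contact structure on $X$ is a global $n$-shifted $1$-form $\alpha$, i.e.\ a map $\alpha\colon \TT_X \to \cO_X[n]$, whose kernel $K$ satisfies the following condition: the curvature $d_{dR}\alpha$ restricts to a nondegenerate pairing on $K$. The goal is to show that $\Map(Y,X)$ carries a transgressed form $\alpha_Y$ with $d_{dR}\alpha_Y$ closed and with its kernel data well defined. Full contact nondegeneracy, however, fails precisely along the direction contributed by the orientation class. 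This is what we encode as the weak shifted contact structure defined just before the statement.

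First, I form the evaluation correspondence $Y \leftarrow Y\times \Map(Y,X) \xrightarrow{ev} X$. I then define $\alpha_Y := \int_{[Y]} ev^*\alpha$, using the $d$-orientation $[Y]\colon \mathbb{R}\Gamma(Y,\cO_Y)\to k[-d]$ and $\cO$-compactness. As in \cite[Theorem 2.1]{PTVV}, this gives a closed-form-compatible map of weight-graded de Rham complexes. Consequently, $\alpha_Y$ is an $(n-d)$-shifted $1$-form, and $d_{dR}\alpha_Y = \int_{[Y]} ev^*(d_{dR}\alpha)$. Second, I identify $\TT_{\Map(Y,X)}$ at a point $f$ with $\mathbb{R}\Gamma(Y, f^*\TT_X)$. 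The contraction with $\alpha_Y$ is then the composite
\[
\mathbb{R}\Gamma(Y,f^*\TT_X)\xrightarrow{f^*\alpha}\mathbb{R}\Gamma(Y,\cO_Y)[n]\xrightarrow{[Y]}k[n-d].
\]
Its fiber contains $\mathbb{R}\Gamma(Y,f^*K)$, together with the extra piece coming from the fiber of $[Y]$ on $\mathbb{R}\Gamma(Y,\cO_Y)$. Third, I check nondegeneracy of the curvature on $\mathbb{R}\Gamma(Y,f^*K)$ by combining fiberwise nondegeneracy on $K$ with Serre-type duality from the orientation. This is exactly the PTVV nondegeneracy argument run on $K$ in place of $\TT_X$. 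It gives the conditions required of a weak structure.

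The main obstacle is the last step: showing that the defect is genuinely there and is controlled by the Tor-amplitude of the orientation class. The fiber of $[Y]\colon \mathbb{R}\Gamma(Y,\cO_Y)\to k[-d]$ is nonzero in general, since the Tor-amplitude of $\mathbb{R}\Gamma(Y,\cO_Y)$ spans $[-d,0]$. As a result, $\ker\alpha_Y$ strictly contains $\mathbb{R}\Gamma(Y,f^*K)$, and the curvature pairing is degenerate on the complement. I would make this explicit by splitting $\mathbb{R}\Gamma(Y,\cO_Y)\simeq k \oplus \cdots \oplus k[-d]$ in the basic case, for instance $Y$ a compact oriented manifold's Betti stack. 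The extra fiber directions then pair to zero under $d_{dR}\alpha_Y$. This shows the structure is weak but not contact, and it motivates the $\Gm$-quotient used in Theorem \ref{thm:quotient_contact}.
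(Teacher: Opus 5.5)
Your proposal follows essentially the same route as the paper: transgress the $1$-form along $\int_{[Y]}$ with trivial line bundle on $\Map(Y,X)$, identify $\TT_M\simeq\mathbb{R}p_{2*}ev^*\TT_X$, use the pasting fiber sequence $\mathbb{R}p_{2*}ev^*\cK_X\to\cK_M\to\mathrm{fib}(\int_{[Y]})[n]$, obtain a perfect pairing on the first term from Serre duality along $[Y]$, and attribute the failure of $\Phi$ to be an equivalence to the nonzero fiber of the orientation map. Your explicit Betti splitting, in which the extra fiber directions pair to zero under $d_{dR}\alpha_Y$, justifies the degeneracy a little more concretely than the paper's remark that $\Phi$ factors through the integration map; note only that, exactly as in the paper, the argument needs $\mathrm{fib}([Y])\neq 0$, which fails when $[Y]\colon C(Y,\cO_Y)\to\K[-d]$ is an equivalence (e.g.\ $Y=\mathrm{pt}$, $d=0$, where $\Map(Y,X)\simeq X$ is genuinely contact).
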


To construct a full topological field theory, the geometric evaluation of bulk spaces must naturally extend to spaces with boundary. We prove that derived fillings yield the appropriate morphisms in the contact category. In brief, we obtain:

\begin{thmx}[Legendrian Fillings]
For a derived filling $f \colon Y \to W$ of an $\cO$-compact, $d$-oriented derived stack $Y$ into an $n$-shifted contact derived Artin stack $X$, the natural restriction map between quotient mapping stacks defines a Legendrian morphism $X_W \to X_Y$ (cf. Theorem \ref{thm:filling_legendrian}).
\end{thmx}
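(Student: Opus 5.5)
The plan is to reduce the statement to the Lagrangian version of the AKSZ construction with boundary, due to Calaque, applied equivariantly to the symplectification, and then descend along the constant $\Gm$-action. Recall that the $n$-shifted contact structure on $X$ yields the symplectification $\widetilde{X}$, the total space of the contact line bundle $\cL_X$ minus the zero section. This carries an $n$-shifted symplectic form $\omega = d\lambda$ that is exact and of weight $1$ for the Liouville $\Gm$-action, with $\widetilde{X}/\Gm \simeq X$. A derived filling $f\colon Y\to W$ means that $W$ carries a relative $d{+}1$-orientation whose boundary restriction recovers the $d$-orientation $[Y]$; equivalently, $\int_W$ is a nullhomotopy of $\int_Y \circ f^*$. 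The same setup underlies Theorem \ref{thm:quotient_contact}, which I will use to equip $X_Y$ with its $(n-d)$-shifted contact structure.

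\emph{Step 1 (symplectified level).} First I show that the restriction $r\colon \Map(W,\widetilde{X})\to \Map(Y,\widetilde{X})$ is Lagrangian for the $(n-d)$-shifted symplectic form $\int_Y ev^*\omega$. This is Calaque's relative AKSZ theorem. The isotropic structure is the nullhomotopy of $r^*\int_Y ev^*\omega$ given by $\int_W ev^*\omega$, using the boundary relation of the relative orientation. Nondegeneracy comes from the relative Poincaré duality fiber sequence
\[
\Gamma(W, u^*\TT_{\widetilde{X}}) \to \Gamma(Y, u^*\TT_{\widetilde{X}}) \to \Gamma(W,Y; u^*\TT_{\widetilde{X}})[1],
\]
together with the identification $\Gamma(W,Y;-)\simeq \Gamma(W,-^\vee)^\vee[-d-1]$. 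I also record that everything is exact: $\int_W ev^*\lambda$ provides the exact isotropic data, so the Lagrangian structure is compatible with the Liouville form $\int_Y ev^*\lambda$.

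\emph{Step 2 (equivariance).} The constant $\Gm$-action on $\widetilde{X}$ induces actions on both mapping stacks, and $r$ is equivariant. The transgressed forms $\int_Y ev^*\lambda$ and $\int_W ev^*\lambda$ again have weight $1$, because integration against an orientation is linear and $\Gm$ acts only on the target. So the Lagrangian structure of Step 1 is a weight-$1$ (homogeneous) Lagrangian structure. Passing to quotients gives $\bar r\colon X_W=[\Map(W,\widetilde X)/\Gm]\to X_Y$.

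\emph{Step 3 (descent to Legendrian).} I then use the correspondence underlying Theorem \ref{thm:quotient_contact}: the symplectification of $X_Y$ is $\Map(Y,\widetilde X)$. Under this correspondence, a morphism $L\to X_Y$ is Legendrian exactly when its pullback $\widetilde L = L\times_{X_Y}\Map(Y,\widetilde X)\to \Map(Y,\widetilde X)$ is a conic Lagrangian, meaning $\Gm$-equivariant with weight-$1$ isotropic structure. For $L=X_W$ the pullback is $\Map(W,\widetilde X)$, since $\Map(W,\widetilde X)\to X_W$ is a $\Gm$-torsor. It therefore remains to check the two Legendrian conditions directly.
\begin{enumerate}
\item The contact form pulls back to zero, with the homotopy supplied by $\int_W ev^*\lambda$ descending along the torsor.
\item The relative tangent complex of $\bar r$ is identified with the conormal twisted by the contact line bundle, so that $\TT_{X_W}\to \cK_{X_Y}$ has the correct Lagrangian-type fiber.
\end{enumerate}
Condition (2) follows from Step 1 by quotienting the tangent sequences by the rank-one Euler direction. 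That direction appears on both sides and cancels.

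\emph{Main obstacle.} I expect the hardest part to be Step 3's identification of contact distributions. The kernel $\cK_{X_Y}$ and the line bundle $\cL_{X_Y}$ must be matched with the $\Gm$-reduced symplectic data. In particular, the weight-$1$ nullhomotopy has to descend coherently, rather than merely pointwise, through the stacky quotient. My first attempt is to work with graded mixed complexes: I would use the weight grading of Calaque's description of contact structures as weight-$1$ symplectic structures on $\widetilde X$ to show that the nullhomotopy from Step 1 lives in weight $1$. Equivalence of $\infty$-categories between conic Lagrangians in the symplectification and Legendrians in the quotient would then give the result.
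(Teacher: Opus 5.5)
Your proposal is correct and follows essentially the same route as the paper. Both arguments take the relative (Calaque) AKSZ Lagrangian structure on $\Map(W,\widetilde{X})\to\Map(Y,\widetilde{X})$, use its $\Gm$-equivariance, and descend this conic Lagrangian along the $\Gm$-torsors to a Legendrian morphism $X_W\to X_Y$; the paper simply quotes that last step from \cite[Theorem 6.2]{IzbudakBerktav2026} instead of re-verifying it as in your Step 3. Your explicit check that the isotropic nullhomotopy $\int_W ev^*\omega$ itself has weight $1$, and not merely that the restriction map is equivariant, is exactly what the paper's assertion ``the Lagrangian structure on $\widetilde{\rho}$ is $\Gm$-equivariant'' needs.
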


Furthermore, this geometric assignment naturally extends to the composition of boundary conditions, allowing us to formalize topological gluing in the derived contact setting. In this regard, we prove:

\begin{thmx}[Topological Gluing]
Given two derived fillings $f_1 \colon Y \to W_1$ and $f_2 \colon Y \to W_2$ of an $\cO$-compact, $d$-oriented derived stack $Y$, the quotient mapping stack into an $n$-shifted contact derived Artin stack $X$ out of the homotopy pushout $W = W_1 \coprod_Y W_2$ is canonically equivalent to the derived intersection of the Legendrian boundary conditions (cf. Theorem \ref{thm:top glu}).
\end{thmx}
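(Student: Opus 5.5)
The plan is to reduce gluing to the standard behaviour of mapping stacks under homotopy pushouts, and then to check that this identification passes through the $\Gm$-quotient and matches the Legendrian structures of Theorem \ref{thm:filling_legendrian}.

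First, $\Map(-,\widetilde{X})$ is a right adjoint (internal Hom) in derived stacks. It therefore sends the homotopy pushout $W = W_1 \coprod_Y W_2$ to a homotopy pullback:
\[
\Map(W,\widetilde{X}) \simeq \Map(W_1,\widetilde{X}) \times_{\Map(Y,\widetilde{X})} \Map(W_2,\widetilde{X}).
\]
This equivalence is natural in the target. In particular it is equivariant for the constant $\Gm$-action on $\widetilde{X}$, because that action acts through postcomposition and commutes with all restriction maps.

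Second, we pass to quotients. Quotienting by $\Gm$ is taking the homotopy colimit of the simplicial diagram $\Map(-,\widetilde{X})\times \Gm^{\bullet}$. For a general diagram, fibre products do not commute with quotients. They do here, because all three stacks map to $B\Gm$ and $[\,\cdot\,/\Gm] = \cdot \times_{\mathrm{pt}}$-descent over $B\Gm$. Concretely, an equivariant pullback square of $\Gm$-stacks descends to a pullback square over $B\Gm$, since pullback along $\mathrm{pt}\to B\Gm$ is conservative and preserves fibre products (effective descent). Hence
\[
X_W \simeq X_{W_1} \times_{X_Y} X_{W_2},
\]
where the maps are the restriction maps of Theorem \ref{thm:filling_legendrian}. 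By that theorem, both $X_{W_i}\to X_Y$ are Legendrian morphisms into the $(n-d)$-shifted contact stack $X_Y$ of Theorem \ref{thm:quotient_contact}. The right-hand side is therefore, by definition, their derived Legendrian intersection.

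Third, we check that this is an identification of structures and not merely of stacks. The derived intersection of two Legendrians in an $(n-d)$-shifted contact stack carries an $(n-d-1)$-shifted contact structure: on the symplectification it is a Lagrangian intersection, and the structure descends by Calaque's $\Gm$-weight description. On the other side, $W$ inherits a $(d+1)$-orientation relative to $\emptyset$ from the glued relative orientations of $f_1$ and the reversed $f_2$. The AKSZ transgression for $X_W$ is computed by integrating $\mathrm{ev}^*$ of the weight-one form against $[W]$. Using $[W]=[W_1]-[W_2]$ relative to $[Y]$ together with the homotopies from Theorem \ref{thm:filling_legendrian} (isotropic structures $h_i$ trivializing the restricted contact data), the transgressed data on the fibre product becomes $h_1 - h_2$. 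That is exactly the structure on the Legendrian intersection.

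The main obstacle is this last compatibility. The shifted $1$-form is only defined up to the twisting line bundle $\cL$, so it is not enough to say that the forms agree. One must show the identification is $\Gm$-weight-equivariant at the level of the symplectified closed $2$-forms, and do the comparison there. I would carry out the whole structural comparison upstairs, on $\Map(-,\widetilde{X})$, where it is the PTVV/Calaque–Haugseng–Scheimbauer Lagrangian-gluing statement. I would then descend via the weight-$1$ equivalence between contact structures on $[\,\cdot\,/\Gm]$ and $\Gm$-equivariant homogeneous symplectic structures.
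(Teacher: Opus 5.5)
Your proposal is correct and follows the paper's proof in its first two steps: $\Map(-,\widetilde{X})$ turns the pushout into a $\Gm$-equivariant pullback, the identification of $\Gm$-stacks with stacks over $B\Gm$ lets the quotient commute with that pullback (your conservativity of pullback along $\mathrm{pt}\to B\Gm$ is this same fact), and Theorem~\ref{thm:filling_legendrian} identifies the legs as Legendrian. Your third step, which matches the glued AKSZ structure with the intersection structure upstairs via $[W]$ and $h_1-h_2$ and then descends, goes beyond the paper's proof, which stops at the equivalence of the underlying stacks, so that step is a strengthening rather than a required ingredient.
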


Beyond the topological structure of boundaries and gluing, constructing a physical field theory requires recovering the homological data that governs the space of fields. We show that our mapping stack formalism successfully manifests the transgression of the contact form. More precisely, we establish:

\begin{thmx}[Transgression of the Contact Form]
Given an $\cO$-compact, $d$-oriented derived stack $Y$ and an $n$-shifted contact derived Artin stack $X$, the integration functional transgresses the canonical $n$-shifted 1-form on the symplectified space $\widetilde{X}$ into a weight 1, $(n-d)$-shifted 1-form on $\widetilde{M}=\mathsf{Map}(Y,\widetilde{X})$ that descends to define the canonical shifted contact 1-form on the quotient mapping stack $X_Y$ (cf. Theorem \ref{thm: transgression of contact}).
\end{thmx}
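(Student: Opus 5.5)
The plan is to run the PTVV transgression mechanism \cite[Theorem 2.1]{PTVV} $\Gm$-equivariantly on the symplectification. The symplectification $\widetilde{X}$, the total space of the contact line bundle $\cL_X^\times$ with the zero section removed, carries a canonical $n$-shifted $1$-form $\lambda$ of weight $1$ for the scaling $\Gm$-action. Its de Rham differential $d_{dR}\lambda$ is the $n$-shifted symplectic form on $\widetilde{X}$, and $\widetilde{X}\to X$ is a $\Gm$-torsor. We work on $\widetilde{M}=\Map(Y,\widetilde{X})$ and consider the evaluation correspondence
\[
\widetilde{M} \xleftarrow{\;p\;} Y\times \widetilde{M} \xrightarrow{\;\mathrm{ev}\;} \widetilde{X},
\]
together with the fundamental class $[Y]\colon \cO_Y\to\cO_Y[-d]$ coming from the $d$-orientation.

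\textbf{Step 1: defining the transgressed form.} We set $\widetilde{\alpha}=\int_{[Y]}\mathrm{ev}^*\lambda$. Concretely, $\mathrm{ev}^*\lambda$ is a section of $\LL_{Y\times\widetilde{M}}[n]$, and we project it to $p^*\LL_{\widetilde{M}}[n]$. We then apply the integration $p_*(-\otimes \cO_Y)\to (-)[-d]$, which exists because $Y$ is $\cO$-compact, so $p_*$ commutes with the relevant tensor products. The output is a $1$-form of degree $n-d$ on $\widetilde{M}$. Integration is natural in the target, and PTVV's argument applies to all closed forms, not only to $2$-forms. Hence integration commutes with $d_{dR}$, and we get $d_{dR}\widetilde{\alpha}=\int_{[Y]}\mathrm{ev}^*\omega$, which is exactly the PTVV $(n-d)$-shifted symplectic form on $\widetilde{M}$.

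\textbf{Step 2: weight.} Let $\Gm$ act on $\widetilde{M}$ by postcomposition with the constant scaling action on $\widetilde{X}$. Then $\mathrm{ev}$ is $\Gm$-equivariant and $p$ is $\Gm$-invariant. Since $[Y]$ is pulled back from $Y$, it is $\Gm$-invariant, so integration preserves weights. Therefore $\widetilde{\alpha}$ has weight $1$. I would check this at the level of graded mixed complexes: $\Gm$-equivariant forms are weight-graded objects, and integration along $Y$ is a map of graded mixed complexes that is linear in the weight grading.

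\textbf{Step 3: descent, the main obstacle.} We must show that $\widetilde{\alpha}$ descends to the canonical contact $1$-form on $X_Y=[\widetilde{M}/\Gm]$. This means producing a line bundle $\cL_{X_Y}$ and an $(n-d)$-shifted form $\alpha\colon \TT_{X_Y}\to\cL_{X_Y}[n-d]$. The line bundle $\cL_{X_Y}$ is associated to the $\Gm$-torsor $\widetilde{M}\to X_Y$ via the weight-one character. We construct $\alpha$ by identifying weight-$1$ equivariant $1$-forms on $\widetilde{M}$ that vanish on the Euler vector field with $\cL$-valued $1$-forms on the quotient. For this we use the fibre sequence $\cO\to \TT_{\widetilde{M}}\to q^*\TT_{X_Y}$ given by the infinitesimal action. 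We need the contraction $\iota_E\widetilde{\alpha}$ to be compatible with this sequence. By the Cartan formula, $\iota_E d_{dR}\widetilde\alpha=\widetilde\alpha$ up to $d_{dR}\iota_E\widetilde\alpha$. For the contact form one normally uses $\lambda$ as the degenerate direction rather than killing it. So the cleanest route is this: define $\alpha$ as the induced map on the associated quotient, and verify that it agrees, after pullback, with the contact form constructed in Theorem \ref{thm:quotient_contact}.

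I expect the hardest part to be a homotopy-coherent point. We need to check that the Euler vector field of $\widetilde{M}$ is the transgression of the Euler field of $\widetilde{X}$. We also need to check that the identification used in Theorem \ref{thm:quotient_contact} is exactly the one induced by integration. Both claims should follow from the naturality of $\int_{[Y]}$ under the action map, since $\Gm\times\widetilde{M}\to\widetilde{M}$ is $\Map(Y,-)$ applied to the action map on $\widetilde{X}$, restricted along constant maps. Once this compatibility is established, nondegeneracy of the descended form is inherited from Theorem \ref{thm:quotient_contact}, and the proof is complete.
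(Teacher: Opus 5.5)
Your outline has the same skeleton as the paper's proof: transgress along $ev$, integrate over $[Y]$, note that weight is preserved because $Y$ carries the trivial action, then descend the weight-one form along $\pi\colon\widetilde M\to X_Y$ to an $\cO(1)$-twisted form. However, the step that carries the content of the statement is left open. Descent, and the identification with the contact form of Theorem~\ref{thm:quotient_contact}, need two facts:
\begin{enumerate}
\item $\widetilde\alpha$ is horizontal, $\iota_{E_{\widetilde M}}\widetilde\alpha\simeq 0$, so that it factors through $\mathrm{cofib}(\cO_{\widetilde M}\to\TT_{\widetilde M})\simeq\pi^*\TT_{X_Y}$;
\item $\widetilde\alpha$ coincides with the Liouville form $\iota_{E_{\widetilde M}}\omega_{\widetilde M}$ of the transgressed symplectic form.
\end{enumerate}
Neither follows from Steps 1--2. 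Your own Cartan computation shows that a weight-one primitive of $\omega_{\widetilde M}$ is $\iota_{E_{\widetilde M}}\omega_{\widetilde M}+d_{dR}g$, where $g=\iota_{E_{\widetilde M}}\widetilde\alpha$ is a weight-one function of degree $n-d$. Nothing you prove forces $g\simeq 0$. So ``define $\alpha$ as the induced map on the associated quotient'' presupposes exactly what must be shown, and the remark about using $\lambda$ as the degenerate direction does not settle it. The same ambiguity is already present upstream, since you specify $\lambda$ only as a weight-one primitive of $\omega_{\widetilde X}$.

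The missing ingredient is the paper's key identity: contraction with the Euler field commutes with transgression,
$\iota_{E_{\widetilde M}}\int_{[Y]}ev^*(-)\simeq\int_{[Y]}ev^*\iota_{E_{\widetilde X}}(-)$,
because $E_{\widetilde M}$ is induced from $E_{\widetilde X}$. This is the naturality you flag as the hardest point. Take $\lambda=\alpha_{\widetilde X}:=\iota_{E_{\widetilde X}}\omega_{\widetilde X}$, which is the horizontal form locally of the form $f\cdot p^*\alpha_X$.
\begin{itemize}
\item Applying the identity to $\omega_{\widetilde X}$ gives $\widetilde\alpha\simeq\iota_{E_{\widetilde M}}\omega_{\widetilde M}$ directly. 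This is how the paper identifies $\alpha_{\widetilde M}$.
\item Applying it to $\lambda$, together with isotropy $\iota_E\iota_E\omega_{\widetilde X}\simeq 0$, gives $\iota_{E_{\widetilde M}}\widetilde\alpha\simeq 0$.
\end{itemize}
With this in hand, your fibre-sequence formulation of descent via $\cO_{\widetilde M}\to\TT_{\widetilde M}\to\pi^*\TT_{X_Y}$ is more precise than the paper's appeal to the local form $f\cdot p^*\alpha_X$ and the weight-one eigenform property. Your Step~1 computation $d_{dR}\widetilde\alpha\simeq\omega_{\widetilde M}$, via $\int_{[Y]}$ being a map of graded mixed complexes, then becomes a consistency check rather than a needed ingredient; the paper's proof does not use it. (Minor point: the orientation is $[Y]\colon C(Y,\cO_Y)\to\K[-d]$, not a map $\cO_Y\to\cO_Y[-d]$.)
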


With the canonical shifted 1-form established on the quotient mapping stack, we complete the geometric realization of the Classical Master Equation and prove:

\begin{thmx}[Derived CME and Sigma Models]
The quotient mapping stack recovers the derived analogues of transgressed contact forms, and the derived Classical Master Equation is governed by the equivalence of perfect complexes induced by the derived de Rham differential (cf. Proposition \ref{prop:derived_cme}). 

As applications, evaluating this geometry provides the derived moduli spaces of fields for the Jacobi ($n=1, d=2$), Courant-Jacobi ($n=2, d=3$), and Loop Space ($n=1, d=1$) Sigma Models. See Section \ref{sec:apps}.
\end{thmx}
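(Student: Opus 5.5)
The statement has two parts: a CME-type statement for the quotient mapping stack $X_Y=[\Map(Y,\widetilde{X})/\Gm]$, and the identification of the three sigma models. I would prove the first part by working entirely on the symplectification and then descending along the $\Gm$-quotient. The steps are as follows.

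\emph{Symplectic input on $\widetilde{M}$.} By the transgression theorem (Theorem \ref{thm: transgression of contact}), the canonical weight $1$, $n$-shifted $1$-form $\lambda$ on $\widetilde{X}$ transgresses along the evaluation map $ev\colon Y\times\widetilde{M}\to\widetilde{X}$. Here $\widetilde{M}=\Map(Y,\widetilde{X})$, and transgression means integrating against the orientation $[Y]$. This produces a weight $1$, $(n-d)$-shifted $1$-form $\alpha_Y=\int_{[Y]}ev^*\lambda$. Since $ev^*$ and $\int_{[Y]}$ commute with the de Rham differential, we get $d_{dR}\alpha_Y=\int_{[Y]}ev^*\omega$. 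This is exactly the transgressed PTVV symplectic form, which is nondegenerate by \cite[Theorem 2.1]{PTVV}. Thus $\widetilde{M}$ is $(n-d)$-shifted symplectic. Its symplectic form is exact, of weight $1$ for the constant $\Gm$-action, and the Euler vector field $E$ of the action is Liouville: $\iota_E\omega_Y=\alpha_Y$.

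\emph{Formulating the derived CME.} I would state the derived CME as the following condition. The map $\omega_Y^\flat\colon\TT_{\widetilde{M}}\to\LL_{\widetilde{M}}[n-d]$ induced by $d_{dR}\alpha_Y$ is an equivalence of perfect complexes. Moreover, the homological vector field $Q$ (the one transgressed from the differential of $\widetilde{X}$) is Hamiltonian for a weight $1$ function $S=\iota_Q\alpha_Y$. Finally, $\{S,S\}=0$ up to coherent homotopy. The bracket here is the Poisson bracket of the shifted symplectic structure.

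The closure condition $\{S,S\}\simeq 0$ follows from $Q^2\simeq 0$, i.e.\ from $\mathcal{L}_Q\omega_Y\simeq 0$ together with nondegeneracy. Because $S$ has weight $1$, it descends to a section of $\cL_{X_Y}$, and the symplectic bracket descends to the Jacobi bracket on sections of the contact line bundle. The CME on $X_Y$ is then the descended identity. The Jacobi bracket of weight-$1$ functions is well defined precisely because the Poisson bracket on $\widetilde{M}$ has weight $-1$.

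\emph{Main obstacle.} The hard step is making ``$Q$ Hamiltonian with $S=\iota_Q\alpha_Y$'' precise in the derived setting. In the derived setting closedness is extra data rather than a property, so one has to check that the key homotopy is compatible with the $\Gm$-equivariant structure. The homotopy in question is
\[
\iota_Q\omega_Y\simeq d S,
\]
which follows from a Cartan-formula homotopy $\mathcal{L}_Q\alpha_Y\simeq 0$. I would first try to obtain this homotopy on $\widetilde{X}$, viewed as an invariant statement there, and then transport it through $\int_{[Y]}ev^*$ using the functoriality established in the transgression theorem.

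\emph{Sigma models.} These are specializations of Theorem \ref{thm:quotient_contact}, checking the degree $n-d$ in each case:
\begin{itemize}
\item Jacobi model: $X$ is a $1$-shifted contact stack (a Jacobi structure) and $Y$ is a $2$-dimensional Betti or de Rham stack, so $n-d=-1$.
\item Courant-Jacobi model: $n=2$, $d=3$, again giving $n-d=-1$.
\item Loop space model: $n=1$, $d=1$, giving $n-d=0$, i.e.\ an unshifted contact structure on the quotient loop stack.
\end{itemize}
In the $n-d=-1$ cases the structure is $(-1)$-shifted, which is the BV setting. We match it with the graded models of \cite{CMM} by showing that, when $\cL_X\simeq\cO_X$, the weak structure of Theorem \ref{prop:weak_contact} pulls back from $X_Y$ along the section $\Map(Y,X)\to X_Y$.
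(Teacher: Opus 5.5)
Your treatment of the sigma models matches the paper: they are Theorem~\ref{thm:quotient_contact} specialized to Betti sources with $(n,d)=(1,2),(2,3),(1,1)$. For de Rham sources, recall that $Z_{DR}$ is $2\dim Z$-oriented.

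The CME part, however, has a genuine gap. The equivalence the statement refers to (Proposition~\ref{prop:derived_cme}) lives on the quotient. It is the twisted self-duality $\cK_{X_Y}\xrightarrow{\sim}\cK_{X_Y}^\vee\otimes^{\mathbb{L}}_{\cO_{X_Y}}\cO(1)[n-d]$ induced by $d_{dR}\alpha_{X_Y}$. You only establish the untwisted equivalence $\TT_{\widetilde{M}}\simeq\LL_{\widetilde{M}}[n-d]$ on the symplectification, which is just PTVV. The passage from one to the other is the entire content of the paper's argument, and it goes as follows:
\begin{enumerate}
\item Pull back along $\pi\colon\widetilde{M}\to X_Y$ and identify $\mathbb{L}\pi^*\TT_{X_Y}$ with $\mathrm{cofib}(E_{\widetilde{M}}\colon\cO_{\widetilde{M}}\to\TT_{\widetilde{M}})$.
\item Set $E^\perp=\mathrm{fib}(\iota_E\omega_{\widetilde{M}}\colon\TT_{\widetilde{M}}\to\cO_{\widetilde{M}}[n-d])$, and use $\iota_E\iota_E\omega_{\widetilde{M}}\simeq 0$ to lift $E$ to a map $\cO_{\widetilde{M}}\to E^\perp$.
\item The pasting law then gives $\mathbb{L}\pi^*\cK_{X_Y}\simeq\mathrm{cofib}(\cO_{\widetilde{M}}\to E^\perp)$.
\item This reduced complex inherits a non-degenerate pairing from $\omega_{\widetilde{M}}$. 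Since the pairing is $\Gm$-equivariant of weight $1$, equivariant descent along $\pi$ gives the equivalence on $X_Y$ with the $\cO(1)$ twist.
\end{enumerate}
Noting that the Poisson bracket descends to a Jacobi bracket on sections of $\cL_{X_Y}$ does not by itself show that the pairing on $\cK_{X_Y}$ is an equivalence. Citing Theorem~\ref{thm:quotient_contact}, whose contact structure includes this non-degeneracy by definition, would also settle this part; your proposal does neither.

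The extra structure on which you build your version of the CME is also not available from the hypotheses. Nothing in the data of a shifted contact derived Artin stack supplies a homological vector field $Q$ on $\widetilde{X}$. In derived algebraic geometry the differential is part of the structure sheaf, not a section of $\TT_{\widetilde{X}}[1]$. In fact, on any cdga model $A$ one has $d_A=-[d_A,\mathrm{gr}]$ for the grading derivation $\mathrm{gr}(a)=|a|\,a$. So the internal differential is exact as a derivation and represents zero in $H^1$ of the tangent complex. A transgressed $Q$ would therefore be null-homotopic, $S=\iota_Q\alpha_Y$ would be null-homotopic along with it, and $\{S,S\}\simeq 0$ would carry no content. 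Nor is there any reason for a global weight-$1$ function $S$ to exist; equivalently, there is no reason for a global section of $\cO(1)$ on $X_Y$. Action functionals of this kind appear only locally, in Darboux models. So the step you flag as the main obstacle cannot be carried out as formulated.

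Even in an honest graded model, $Q^2=0$ only yields $d_{dR}\{S,S\}\simeq 0$. To conclude $\{S,S\}\simeq 0$ you still need the weight count: $\{S,S\}$ has weight $1$, and a weight-$1$ function $f$ satisfies $f=\iota_E d_{dR}f$. The paper sidesteps all of this by taking the geometric CME to be exactly the twisted non-degeneracy above, with integrability supplied by the closedness of the transgressed forms. That is the statement your argument needs to reach.
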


Next, beyond specific low-dimensional examples, our formalism universally maps topological cobordisms of arbitrary dimension to non-linear Legendrian spans. To elevate these mapping stacks into fully Extended Topological Field Theories (ETFTs) and extract numerical invariants, we post-compose our geometric construction with the perverse linearization developed in the companion paper \cite[Theorem C]{Izbudak_Monodromic}, leading to:

\begin{thmx}[Linearized ETFTs]
The quotient mapping stack formalism maps topological cobordisms to Legendrian spans, generating a geometric contact ETFT. Post-composing this with the perverse linearization of \cite{Izbudak_Monodromic}, which is conditional on the monodromic refinement of the symplectic Joyce conjecture assumed there, generates Cohomological Contact ETFTs (cf. Theorem \ref{thm:linear_etft}).
\end{thmx}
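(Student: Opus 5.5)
The statement to prove is Theorem~G (Linearized ETFTs). The plan has two layers: a geometric contact ETFT built from quotient mapping stacks, and its post-composition with the perverse linearization functor of \cite[Theorem C]{Izbudak_Monodromic}.

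\textbf{Step 1: the geometric functor.} On objects, a closed $\cO$-compact $d$-oriented $Y$ is sent to $X_Y=[\Map(Y,\widetilde{X})/\Gm]$. By Theorem~A this is an $(n-d)$-shifted contact stack. On $1$-morphisms, a cobordism $Y_0 \to W \leftarrow Y_1$, regarded as a derived filling of $Y_0\sqcup Y_1$, is sent to the span $X_{Y_0}\leftarrow X_W\to X_{Y_1}$. Theorem~C says $X_W\to X_{Y_0}\times X_{Y_1}$ is Legendrian. Here the product should be read with the contact form on the first factor negated, matching the orientation reversal of $Y_0$. I would also check that $\Map(-,\widetilde{X})$ turns disjoint unions into products and that the quotient by the diagonal constant $\Gm$ is compatible with this.

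\textbf{Step 2: composition and functoriality.} Composition of cobordisms is the homotopy pushout $W_1\coprod_{Y}W_2$. Theorem~D identifies its quotient mapping stack with the derived intersection of the Legendrian boundary conditions, which is exactly composition in the category of Legendrian spans. Higher cobordisms (with corners) are handled by iterating the same argument, following the approach of \cite{CHS2025}. Concretely, I would show that the assignment $W\mapsto \Map(W,\widetilde{X})$ already gives a symmetric monoidal functor into $\Gm$-equivariant Lagrangian spans of weight~$1$. I would then pass to quotients by $\Gm$, using the correspondence between weight-$1$ symplectic structures on the symplectification and contact structures on its quotient, as in \cite{Calaque2019}. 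Coherence at the symplectified level is supplied by \cite{CHS2025}. What remains is to check that the $\Gm$-quotient commutes with the relevant fiber products. This should hold because the $\Gm$ action is free and constant, so the quotient is a principal $\Gm$-bundle and base-changes correctly.

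\textbf{Step 3: linearization.} Post-compose with the perverse linearization of \cite[Theorem C]{Izbudak_Monodromic}, which sends contact stacks and Legendrian spans to perverse (monodromic) sheaf categories and functors. This step is valid only under the monodromic refinement of the symplectic Joyce conjecture assumed there. The composite is a cohomological contact ETFT.

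\textbf{Main obstacle.} I expect the hardest point to be the orientation data required by the linearization. Each Legendrian $X_W$ needs compatible orientations, i.e.\ square roots of the relevant determinant line bundles, and these must glue along the compositions given by Theorem~D. I would try to construct them on $\Map(W,\widetilde{X})$ by transgressing an orientation of $\widetilde{X}$ via the $d$-orientation of $W$, and then show that they descend along the $\Gm$-quotient. Whatever cannot be built this way I would take as a standing hypothesis inherited from \cite{Izbudak_Monodromic}.
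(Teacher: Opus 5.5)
\textbf{Gap in Step 3: the composite is not defined as you set it up.} Your Steps 1--2 are more detailed than the paper, which simply asserts $\mathcal{Z}_c\colon Bord_d\to Leg_{n-d+1}$ on the strength of Theorems~\ref{thm:filling_legendrian} and~\ref{thm:top glu}. One caveat there: \cite{CHS2025} by itself does not give the $\Gm$-equivariant, weight-$1$ version you need. You would have to rerun their argument over $B\Gm$ with $\cO(1)$-twisted forms.

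The genuine gap is in Step 3. You post-compose with $F$ as if it accepted arbitrary contact stacks and Legendrian spans. In fact $F$ is a $2$-functor defined only on $Leg_0^{\mathrm{pr}}$, and nothing in your plan puts $\mathcal{Z}_c$ there. Two conditions are missing.
\begin{itemize}
\item \emph{The shift.} The linearization exists only at shift $0$. The contact Darboux theorem and the monodromic perverse sheaves it rests on are only available for $(-1)$-shifted contact stacks. Since $\mathcal{Z}_c$ lands in $Leg_{n-d+1}$, the composite is defined only after restricting to $n=d-1$, so that a closed $d$-manifold $W$ gives a $(-1)$-shifted $X_W$. For other $(n,d)$ your Step 3 does not make sense.
\item \emph{Properness.} $Leg_0^{\mathrm{pr}}$ requires the $1$- and $2$-morphisms to be proper, not only graded-oriented. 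You single out orientations as the main obstacle but never mention properness. It is not automatic for restriction maps between mapping stacks, and no earlier result supplies it.
\end{itemize}
The paper proves neither condition. It restricts to $n=d-1$ and takes ``$\mathcal{Z}_c$ takes values in $Leg_0^{\mathrm{pr}}$'' (properness and graded orientations together) as an explicit hypothesis, alongside \cite[Setup 5.1]{Izbudak_Monodromic}. Your fallback of assuming whatever orientation data you cannot transgress is in that spirit. But the hypothesis must also cover properness, and the degree restriction must be imposed.

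\textbf{Error in Step 1: product versus contact product.} $X_{Y_0}\times X_{Y_1}$ is not a contact stack in any natural way; classically, a product of two contact manifolds is even-dimensional. So ``Legendrian in the product with one form negated'' is not meaningful. Apply Theorem~\ref{thm:filling_legendrian} to $W$ viewed as a filling of $Y_0\sqcup Y_1$, with the orientation of $Y_0$ reversed. This gives a Legendrian into
\[ X_{Y_0\sqcup Y_1}=\bigl[(\Map(Y_0,\widetilde{X})\times\Map(Y_1,\widetilde{X}))/\Gm\bigr] \]
with the \emph{diagonal} action. That is the contact product, a $\Gm$-torsor over $X_{Y_0}\times X_{Y_1}$ rather than the product itself. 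It is exactly the monoidal structure for which $F$ is monoidal.

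Your own remark about the diagonal $\Gm$ points in this direction. Once the target of $\mathcal{Z}_c$ carries the contact product as its monoidal structure, your Step 2 descent strategy is a reasonable route to the geometric ETFT that the paper only asserts. This includes commuting the $\Gm$-quotient with fiber products, which holds because pullbacks of stacks over $B\Gm$ are computed in the underlying $\infty$-category of derived stacks.
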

\vspace{1in}

As a further application of our quotient mapping stack formalism, we establish the contact analogue of the PTVV theorem for moduli spaces of perfect complexes given in \cite[Theorem 0.1]{PTVV} and provide a complete characterization when the source stack $Y$ is Calabi-Yau, De Rham, or  Betti. In brief, we prove:
\begin{thmx}[Contact PTVV and Calabi-Yau Spaces] 
Let $Y$ be an $\cO$-compact derived stack. The quotient mapping stack 
$$[\Map(Y, T^*[2]\mathrm{Perf}^\circ)/\Gm]$$ 
canonically admits a strict $(2-d)$-shifted contact structure via transgression if and only if $Y$ is a Calabi-Yau derived stack (or Calabi-Yau category) of dimension $d$. See Proposition \ref{prop: contact Calabi-Yau}.
\end{thmx}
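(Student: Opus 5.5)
We prove the two directions separately, using the quotient mapping stack formalism of Theorem A together with the transgression statement of Theorem E. Throughout, $\widetilde{X}$ denotes the symplectification of the $2$-shifted contact stack $T^*[2]\mathrm{Perf}^\circ$. We identify $\widetilde{X}$, with its weight-one Liouville $\Gm$-action and canonical weight-one $2$-shifted $1$-form $\lambda$, with the punctured total space of the contact line bundle over $\mathrm{Perf}$. Since $\mathrm{Perf}$ is $2$-shifted symplectic by \cite[Theorem 0.1]{PTVV}, the symplectic form $d\lambda$ restricts to this structure. We then form $\widetilde{M} = \Map(Y, \widetilde{X})$ and the quotient $X_Y = [\widetilde{M}/\Gm]$.

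\emph{Sufficiency.} Suppose $Y$ is Calabi--Yau of dimension $d$. Then the trivialization $\cO_Y \simeq \omega_Y[d]$ provides a class $[Y] \colon C^*(Y,\cO_Y) \to k[-d]$ that is nondegenerate. This is exactly a $d$-orientation in the sense of \cite{PTVV}: for every perfect $E$ on $Y \times \Spec A$, the induced pairing $C^*(E) \otimes C^*(E^\vee) \to A[-d]$ is perfect. Since $Y$ is $\cO$-compact, Theorem A applies directly and gives a $(2-d)$-shifted contact structure on $X_Y$. By Theorem E this structure is the transgression $\int_{[Y]} \mathrm{ev}^*\lambda$, which descends along the $\Gm$-quotient. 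It is \emph{strict} rather than weak in the sense of Theorem B, because the twisting line bundle is not trivialized; the descent is made possible by the quotient. For the Betti and de Rham cases, the relevant orientations come from Poincaré duality on $Y_B$ and $Y_{dR}$, so these are handled by the same argument.

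\emph{Necessity.} This is the harder direction and the main obstacle. Suppose the transgressed form $\alpha = \int_{[Y]}\mathrm{ev}^*\lambda$ descends to a strict $(2-d)$-shifted contact structure for some class $[Y]\colon C^*(Y,\cO_Y)\to k[-d]$. We reduce to the symplectic statement. Strictness of the contact structure is equivalent to nondegeneracy of $d\alpha$ on $\widetilde{M}$, in the weight-one sense, modulo the Reeb/Euler direction. By naturality of transgression, $d\alpha = \int_{[Y]} \mathrm{ev}^*\omega_{\widetilde X}$. We then restrict to a point of $\widetilde{M}$ lying over a map $Y \to \mathrm{Perf}$ classifying the structure sheaf $\cO_Y$. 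The tangent complex there is $C^*(Y,\cO_Y)[1]$, with one extra cotangent copy coming from the $T^*[2]$ factor. Nondegeneracy of the transgressed pairing at this point says exactly that the map $C^*(Y,\cO_Y) \to C^*(Y,\cO_Y)^\vee[-d]$ induced by $[Y]$ is an equivalence. By Serre duality on the $\cO$-compact stack $Y$, this equivalence is $\omega_Y \simeq \cO_Y[d]$, that is, the Calabi--Yau condition. For the categorical version we run the same argument with $\mathrm{Perf}(Y)$ replaced by a smooth proper category $\mathcal{C}$, using Hochschild homology classes in place of $[Y]$ (following Brav--Dyckerhoff).

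The delicate points are twofold. First, we must check that weight-one nondegeneracy on the symplectified space, with the Euler direction removed, still detects the full pairing and not just a quotient of it. Second, we must check that a pointwise nondegeneracy condition globalizes to an equivalence of sheaves and not merely of global sections. For the second point, we intend to test against the points classifying all of $\mathrm{Perf}(Y)$, namely $E = \cO_y$-type generators, which upgrades the pairing on global sections to a trivialization of $\omega_Y$.
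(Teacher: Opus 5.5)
The necessity direction has a genuine gap at the sentence deriving $\omega_Y\simeq\cO_Y$ from non-degeneracy at the single point over $\cO_Y$.

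That point only tells you that $[Y]$ makes $C(Y,\cO_Y)$ self-dual of degree $d$. This does not force the canonical bundle to be trivial. Let $\pi\colon Y\to S$ be the blow-up of a K3 surface at a point, with exceptional curve $C$. Then $\mathbb{R}\pi_*\cO_Y\simeq\cO_S$, so $C(Y,\cO_Y)\simeq C(S,\cO_S)$ is perfectly self-paired into $\K[-2]$ by any nonzero $[Y]\colon H^2(Y,\cO_Y)\to\K$. Yet $\omega_Y\simeq\cO_Y(C)\not\simeq\cO_Y$. (For a general $\cO$-compact derived stack there is also no dualizing line bundle to which ``Serre duality'' could be applied.)

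The paper instead evaluates the transgressed pairing at the points over \emph{every} $E\in\mathrm{Perf}(Y)$. It requires the trace pairing $\mathbb{R}\Hom_Y(E,E)\otimes\mathbb{R}\Hom_Y(E,E)\to C(Y,\cO_Y)\to\K[-d]$ to be perfect for all $E$ at once, and only this whole family of conditions is identified with the Calabi--Yau condition.

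Your closing remark about skyscrapers is the right repair for smooth proper $Y$. Write $[Y]$ as a section $s\in H^0(Y,\omega_Y)$ via Serre duality. The pairing on $\mathrm{Ext}^*(\cO_y,\cO_y)=\wedge^*T_yY$ is perfect iff $s(y)\neq 0$, so ranging over all closed points gives $s\colon\cO_Y\xrightarrow{\sim}\omega_Y$. But you only announce this step, and it needs $\cO_y$ to be perfect.

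A version valid for arbitrary $\cO$-compact $Y$ is to run your own point-wise reduction at a point over $F=E\oplus\cO_Y$. The trace pairing on $\mathcal{E}nd(F)\simeq\mathcal{E}nd(E)\oplus E\oplus E^\vee\oplus\cO_Y$ is the orthogonal sum of three pieces: the pairing on $\mathcal{E}nd(E)$, the pairing on $\cO_Y$, and the evaluation pairing between $E$ and $E^\vee$. So perfectness at all such points is equivalent to $C(Y,E)\otimes C(Y,E^\vee)\to\K[-d]$ being perfect for every perfect $E$, i.e.\ to $[Y]$ being a $d$-orientation. That is how the Calabi--Yau condition has to be read in this generality.

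Two further corrections.

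First, your setup misidentifies the target. In the statement, $T^*[2]\mathrm{Perf}^\circ$ is itself the weight-one symplectification: it is the paper's $\widetilde{\mathcal{F}}$, lying over the contact stack $\mathbb{P}(T^*[2]\mathrm{Perf})=[T^*[2]\mathrm{Perf}^\circ/\Gm]$. Its symplectic form is the canonical cotangent form $d\lambda$, which has weight one for fiber scaling and is not obtained from the PTVV form on $\mathrm{Perf}$. It is neither a contact stack whose symplectification still has to be formed, nor a $\Gm$-torsor over $\mathrm{Perf}$. So your $\widetilde{M}$ is not the stack in the statement. Your tangent computation (``one extra cotangent copy'') silently uses the correct target, so this is repairable.

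Second, in the sufficiency direction, strictness has nothing to do with the line bundle being non-trivialized. It comes from the PTVV non-degeneracy of $\int_{[Y]}\mathrm{ev}^*d\lambda$ for a $d$-orientation, descended via Theorem~\ref{thm:quotient_contact}.

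Finally, your first ``delicate point'' is exactly what the paper disposes of at the outset. It uses Theorem~\ref{thm:A} and Theorem~\ref{thm:quotient_contact} as an equivalence between strict contact structures on the quotient and weight-one non-degenerate symplectic structures on $\Map(Y,T^*[2]\mathrm{Perf}^\circ)$. That is full non-degeneracy, not non-degeneracy ``modulo the Euler direction''.
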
 It follows that when $Y$  is a smooth and proper Calabi-Yau variety of dimension $d$, the quotient mapping stack gives rise to the \textit{derived moduli stack of projective Higgs perfect
complexes on $Y$}; when  $Y = Z_{DR}$ is the de Rham stack of a smooth and proper variety $Z$ of dimension $d$, the quotient mapping stack evaluates to the \emph{projectivized cotangent stack of perfect complexes with flat connections} on $Z$, denoted $\mathbb{P}(T^*[2(1-d)]\mathrm{Perf}_{DR}(Z))$, which admits a canonical $2(1-d)$-shifted contact structure; when $Y = M_B$ is the Betti stack associated with a compact oriented topological manifold $M$ of dimension $d$, the quotient mapping stack evaluates to the \emph{projectivized cotangent stack of perfect complexes of local systems} on $M$, denoted $\mathbb{P}(T^*[2-d]\mathrm{Loc}(M, \mathrm{Perf}))$, which admits a canonical $(2-d)$-shifted contact structure. Proposition \ref{prop: contact Calabi-Yau}, with some follow-up discussion, details each case.
\medskip

\paragraph{\bfem{Organization of the paper}}
Section \ref{sec:preliminaries} reviews the foundations of derived algebraic geometry and shifted geometric structures. Section \ref{sec:contact_aksz} identifies the algebraic obstruction on the standard mapping stack and proves the existence of the contact structure on the quotient mapping stack. Section \ref{sec:weak_contact} analyzes the unmodified stack under trivial line bundle assumptions. Section \ref{sec:fillings} establishes that derived fillings induce Legendrian morphisms between quotient mapping stacks. Section \ref{sec:cme} verifies the derived Classical Master Equation. Section \ref{sec:apps} evaluates the AKSZ construction for specific dimensions to define derived contact Sigma Models and constructs Linearized Extended Topological Field Theories, and proves the contact analogue of the PTVV theorem for moduli spaces of perfect complexes. \vspace{0.1in}

\paragraph{\bfem{Conventions and notations}.} Throughout the paper, $ \mathbb{K} $ will be an algebraically closed field of characteristic zero. All cdgas will be graded in nonpositive degrees and  over $\mathbb{K}.$ We always consider $\K$-schemes/stacks, and we assume that all classical $ \K $-schemes are \emph{locally of finite type}, and that all derived $ \K $-schemes/stacks $ {X} $ are  \emph{locally finitely presented.}

\section{Preliminaries}\label{sec:preliminaries}

\subsection{Derived schemes and stacks} Let us first review the basics of derived algebraic geometry. The punchline is that the study of (affine) derived schemes is formally dual to
the study of the so-called \textit{commutative differential graded algebras},
providing an algebraic framework for derived geometry. These algebras generalize commutative rings by encoding relations and syzygies into the differential
structure.  More precisely, we have:
\begin{definition}[Commutative Differential Graded Algebra]
  A \bfem{commutative differential graded algebra} (cdga) over $\K$
  is a graded $\K$-algebra $A = \bigoplus_{i \le 0} A^i$ concentrated
  in non-positive cohomological degrees, equipped with a differential
  $d: A^i \to
  A^{i+1}$ satisfying $d^2 = 0$ and the graded Leibniz rule.
  The $\infty$-category of such algebras is denoted by
  $\mathrm{cdga}_\K^{\leq 0}$.
\end{definition}

We now present the underlying (higher) spaces of interest in this paper and outline their essential properties, along with several useful remarks.

\begin{definition}[Affine Derived Schemes]
  The $\infty$-category of \bfem{affine derived schemes} over $\K$,
  denoted $\mathbf{dAff}_\K$, is defined as the opposite
  $\infty$-category $(\mathrm{cdga}_\K^{\leq 0})^{\mathrm{op}}$. For
  any $A \in \mathrm{cdga}_\K^{\leq 0}$, the corresponding affine
  derived scheme is denoted $\Spec(A)$ \cite[Definition 2.2.2.4]{ToenHAG}.
\end{definition}

\begin{definition}[Derived Stacks]
  A \bfem{derived stack} over $\K$ is a functor $X \colon
  \mathrm{cdga}_\K^{\leq 0} \to \mathcal{S}$ taking values in the
  $\infty$-category of spaces (Kan complexes), such that $X$
  satisfies exact hyperdescent with respect to the \'{e}tale
  topology. The $\infty$-category of derived stacks, denoted
  $\mathbf{dSt}_\K$, is constructed as the left Bousfield
  localization of the $\infty$-category of presheaves
  $\mathrm{Fun}(\mathrm{cdga}_\K^{\leq 0}, \mathcal{S})$ along
  \'{e}tale hypercovers \cite[Definition 2.2.2.14]{ToenHAG}.
\end{definition}
In other words, a derived stack $ {X} $ is  a  functor  $$ {X}:  \mathrm{cdga}_{\K}^{\leq 0} \rightarrow  \mathcal{S}, \ \ A\mapsto X(A) \simeq \mathrm{Map}_{\mathbf{dSt}_\K}(\Spec(A), {X}),$$ satisfying a descent condition.
For more details, we refer to \cite{ToenHAG}. 
It should then be noted that any affine derived $\mathbb{K}$-scheme $X$ can be corepresented by a cdga $A$, where $\Spec(A)$ is given as the functor \[ \Spec (A): B\in \mathrm{cdga}_\K^{\leq 0}  \longmapsto Hom_{\mathrm{cdga}_{\mathbb{K}}^{\leq 0}} (A,B) \in \mathcal{S}.\]

The concept of a derived stack may sometimes be too wild for some purposes because a typical $A$-point $p\in X(A)$ is represented by a general morphism $p: \Spec A \to X$ in $\mathbf{dSt}_\K$. When we ask for additional conditions on those morphisms, such as being a Zariski open embedding or being smooth (of some relative dimension), the resulting space may admit a tractable description in terms of affine covers, leading to the following special kind of derived stacks.

\begin{definition}[Derived Schemes]
  \label{def:derived_schemes}
  An object $X$ in the $\infty$-category of derived stacks $\mathbf{dSt}_\K$ is a \bfem{derived scheme} over $\K$ if it can be covered by Zariski open affine derived schemes $Y \subset X$. The $\infty$-category of derived schemes, denoted $\mathbf{dSch}_\K$, is the full subcategory of $\mathbf{dSt}_\K$ spanned by the derived schemes.
\end{definition}

\begin{definition}[Smooth Morphisms of Derived Stacks]
  A morphism of derived stacks $f: X \to Y$ is \bfem{smooth} if it
  is locally of finite presentation and its relative cotangent
  complex $\LL_{X/Y}$ is a perfect complex of Tor-amplitude $[0, 0]$,
  meaning it acts homotopically as a locally free sheaf
  \cite[Section 2.2.5]{ToenHAG}.
\end{definition}

\begin{definition}[Derived Artin Stacks]
  A \bfem{derived Artin stack} is a derived stack
  which is $m$-geometric for some integer $m$, for the \'{e}tale
  topology, and for the class of smooth maps \cite[Definition
  1.3.1]{PTVV}. 
\end{definition}
As mentioned before, all derived $ \K $-schemes/stacks $ {X} $ are assumed to be \textit{locally finitely presented.} We now recall the deformation theory of these spaces and the shifted geometric structures they support.

\subsection{Shifted symplectic and contact structures} Let $L_{qcoh}(X)$ denote the \bfem{stable $\infty$-category of quasi-coherent complexes} on a derived stack $X$. The deformation theory of $X$ is defined by the \bfem{cotangent complex} $\LL_{X} \in L_{qcoh}(X)$, which is the quasi-coherent complex of derived K\"{a}hler differentials. The \bfem{tangent complex} $\TT_{X}$ is defined as the derived $\cO_X$-linear dual complex $\TT_X \simeq \mathbb{R}\underline{\mathcal{H}om}(\LL_X, \cO_X)$.

By \cite[Definition 1.12]{PTVV}, the \bfem{space of closed $p$-forms of degree $n$} on $X$ is defined via the mapping space $\mathrm{Map}_{\epsilon-\mathbf{dg}_\K^{gr}}(\K, \mathbf{DR}(X)[n+p](p))$ in the $\infty$-category of graded mixed $\K$-complexes. An underlying $p$-form of degree $n$ induces a class in the cohomology group $H^{n}(X, \wedge^{p} \LL_{X})$. Denote the spaces of such by $\mathcal{A}^{p,cl}_\K(X,n)$ and $\mathcal{A}^{p}_\K(X,n)$, respectively.

The space $\mathcal{A}^{p,cl}(X,n)$ is usually more complicated than the space $\mathcal{A}^p(X,n)$ even if we make standard geometric assumptions on $X$. Thanks to \cite[Prop. 1.14]{PTVV}, we have the following identification for the space $\mathcal{A}^p(X,n)$:
\begin{equation*}
    \mathcal{A}^p(X,n)\simeq {\rm Map}_{L_{qcoh}(X)}(\mathcal{O}_X, \wedge^p\mathbb{L}_X[n]). 
\end{equation*}
 Thus, any 2-form of degree $n$  induces a morphism $\mathcal{O}_X \rightarrow  \wedge^2\mathbb{L}_X[n]$ in $D_{qcoh}(X)$, and hence, by duality, a morphism $\mathbb{T}_X \wedge \mathbb{T}_X \rightarrow \mathcal{O}_X[n]$. By adjunction, this gives the induced morphism $\Theta_{\omega} : \mathbb{T}_X \rightarrow \mathbb{L}_X [n]$. Furthermore, there is a natural projection morphism $\mathcal{A}^{p,cl}(X,n) \to \mathcal{A}^p(X,n)$ from closed shifted forms to the underlying forms.

An \bfem{$n$-shifted symplectic structure} on $X$ is then defined to be a closed 2-form $\omega$ of degree $n$ whose underlying 2-form induces an equivalence $\Theta_\omega \colon \TT_X \xrightarrow{\sim} \LL_X[n] $ in $L_{qcoh}(X)$. 

A map $f \colon L \to X$ into a symplectic stack is \bfem{isotropic} if it is equipped with a null-homotopy $h \colon f^*\omega \simeq 0$. The isotropic structure $h$ on $f \colon L \to X$ induces a canonical morphism $\Theta_h \colon \TT_L \to \LL_{L/X}[n-1]$. Such structure is called \bfem{Lagrangian} if $\Theta_h$ is an equivalence.

Following \cite[Definition 3.1]{IzbudakBerktav2026}, an \bfem{$n$-shifted contact structure} on $X$ consists of a twisting line bundle $\cL$, an $n$-shifted twisted $1$-form $\alpha \in \pi_0 \mathbb{R}\Gamma(X, \LL_X \otimes^{\mathbb{L}}_{\cO_X} \cL[n])$, and a non-degenerate contact distribution defined as the homotopy fiber
$$
\cK \simeq \mathrm{fib}(\TT_X \xrightarrow{\alpha^\vee} \cL[n]).
$$
A morphism $f \colon L \to X$ into a contact stack is \bfem{Legendrian} if it is equipped with an isotropic null-homotopy $f^*\alpha \simeq 0$ that induces a stable fiber sequence
\begin{equation*}
\cO_L \to \LL_{L/X} \otimes^{\mathbb{L}}_{\cO_L} f^*\cL[n-1] \to \TT_L \quad \text{ in }\  L_{qcoh}(L).
\end{equation*}

The \bfem{derived symplectification} of an $n$-shifted contact stack $X$ is the total space of the principal $\Gm$-bundle $p \colon \widetilde{X} \to X$ associated with $\cL$. 
The $\Gm$-action on a derived stack induces a fiber grading on the derived de Rham complex over $B\Gm$ \cite[Section 3]{Calaque2019}. A closed $p$-form $\omega$ on $\widetilde{X}$ has \bfem{weight $w$} if it defines a point in the graded mapping space
$$
    \mathcal{A}^{p, \mathrm{cl}, \{w\}}(\widetilde{X}, n) \simeq \mathrm{Map}_{\epsilon-\mathbf{dg}_{\K}^{gr}}(\K, \mathbf{DR}(\widetilde{X})[n+p](p)\{w\}),
$$
where $\epsilon-\mathbf{dg}_{\K}^{gr}$ denotes the $\infty$-category of graded mixed complexes, and $\{w\}$ denotes the $w$-th shift in the fiber grading. The $n$-shifted symplectic form $\omega_{\widetilde{X}}$ on $\widetilde{X}$ carries a weight of 1. That observation plays a central role in manifesting the descending of geometric data. In that respect, the derived transversality theorem has been developed in \cite[Theorem 5.1]{IzbudakBerktav2026}, stating that the quotient of an $n$-shifted symplectic stack by a weight 1 $\Gm$-action defines an $n$-shifted contact structure on the base. That is, taking the quotient of a shifted symplectic stack by a weight 1 $\Gm$-action descends the symplectic data to contact data. More precisely, we have:
\begin{theorem}[Derived Analogue of Classical Transversality]  \label{thm:A}
Given an $n$-shifted symplectic derived Artin stack $(\widetilde{X},
  \omega_{\widetilde{X}})$
  equipped with a $\Gm$-action of weight 1,
  the stack quotient $[\widetilde{X} / \Gm]$ inherits an
  $n$-shifted contact structure.  
\end{theorem}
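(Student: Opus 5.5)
The plan is to descend the symplectic data along the quotient map $q \colon \widetilde{X} \to X := [\widetilde{X}/\Gm]$, which is a principal $\Gm$-bundle, and to read off the three pieces of data in the definition of an $n$-shifted contact structure: the line bundle, the twisted $1$-form, and the non-degeneracy of the contact distribution. The line bundle $\cL$ is the one associated to $q$ through the weight $1$ character of $\Gm$; equivalently, it is the descent of $\cO_{\widetilde{X}}\{1\}$. For the $1$-form we use the Euler (infinitesimal generator) vector field $E \colon \cO_{\widetilde{X}} \to \TT_{\widetilde{X}}$ of the action, which has weight $0$. Contracting gives $\lambda := \iota_E \omega_{\widetilde{X}} \in \mathcal{A}^{1}(\widetilde{X}, n)$. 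Since $\omega_{\widetilde{X}}$ is closed of weight $1$, the graded mixed structure makes the weight $1$ component satisfy a Cartan identity $\mathcal{L}_E = d\iota_E + \iota_E d$, which acts by multiplication by the weight. This should give $\omega_{\widetilde{X}} \simeq d\lambda$ as closed forms, so $\lambda$ is $\Gm$-equivariant of weight $1$.

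The second step is descent of $\lambda$. There is a fiber sequence $q^*\LL_X \to \LL_{\widetilde{X}} \to \LL_{\widetilde{X}/X} \simeq \cO_{\widetilde{X}}$, where the last map is contraction with $E$. The composite $\iota_E\lambda = \omega(E,E)$ vanishes, and we should in fact produce a canonical null-homotopy of it, coming from the antisymmetry of $\omega$. With that null-homotopy, $\lambda$ lifts to a section of $q^*\LL_X[n]$ of weight $1$. Weight-$1$ equivariant sections on $\widetilde{X}$ are exactly sections of $\LL_X \otimes \cL[n]$ on $X$, by equivariant descent of quasi-coherent complexes along $q$. This yields $\alpha \in \pi_0\mathbb{R}\Gamma(X, \LL_X \otimes \cL[n])$.

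The third step is non-degeneracy. We pull $\cK = \mathrm{fib}(\TT_X \xrightarrow{\alpha^\vee} \cL[n])$ back along $q$ and compare it with $\TT_{\widetilde{X}}$ using the dual sequence $\cO_{\widetilde{X}} \xrightarrow{E} \TT_{\widetilde{X}} \to q^*\TT_X$. Under $\Theta_\omega \colon \TT_{\widetilde{X}} \simeq \LL_{\widetilde{X}}[n]$, the vector $E$ goes to $\lambda$. Hence $q^*\cK$ is identified with the reduction $\ker(\lambda)/\langle E\rangle$. The restriction of $\Theta_\omega$ to it is then an equivalence $q^*\cK \simeq q^*\cK^\vee \otimes q^*\cL[n]$, which is the twisted non-degeneracy required in \cite[Definition 3.1]{IzbudakBerktav2026}. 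This can be checked with a five-lemma argument on the two fiber sequences, and it descends because $q$ is faithfully flat.

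The main obstacle I expect is the first step: constructing $\lambda$ with $d\lambda \simeq \omega_{\widetilde{X}}$ coherently, as closed forms and not only at the level of underlying forms, together with the homotopy $\iota_E\lambda \simeq 0$. To handle it, I would work in the weight-graded mixed complex $\mathbf{DR}(\widetilde{X})\{1\}$. The $\Gm$-action there equips $\mathbf{DR}$ with a contracting homotopy $\iota_E$ for $\epsilon$ on nonzero weights, so weight-$1$ closed forms are canonically exact. The primitive obtained this way is exactly $\iota_E\omega$, and its coherence is inherited from the functoriality of the weight decomposition \cite[Section 3]{Calaque2019}.
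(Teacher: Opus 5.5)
Your proposal is correct and follows the route the paper relies on. The paper itself only cites \cite[Theorem 5.1]{IzbudakBerktav2026} for this statement, but it runs exactly your template for $\widetilde{M}$ in Theorem \ref{thm: transgression of contact} and Proposition \ref{prop:derived_cme}: the form $\iota_E\omega$ has weight $1$ and descends to an $\cO(1)$-twisted form, $\pi^*\cK_{X_Y}$ is identified with the reduction $E^\perp/\langle E\rangle$, and $\Gm$-equivariant descent finishes; your torsor sequence $q^*\LL_X\to\LL_{\widetilde{X}}\to\cO_{\widetilde{X}}$ and the Cartan homotopy $\omega\simeq d\lambda$ (which identifies the pairing induced by $d_{dR}\alpha$ with $\Theta_\omega$) are more careful than the paper's local formula $\alpha_{\widetilde{X}}\simeq f\cdot p^*\alpha_X$, though the equivalence descends because it is $\Gm$-equivariant of weight $1$, with faithful flatness of $q$ serving only to detect that the descended map is an equivalence.
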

\vspace{.1in}

\paragraph{\bf Compactness, orientation, and more. } To develop the AKSZ construction, which relies on integration over mapping spaces, it is necessary to first adapt some topological concepts. Specifically, we have: 

\begin{definition}
    \begin{enumerate}\itemsep=.1in
        \item A derived stack $Y$ is \bfem{$\cO$-compact} if $\cO_Y$ is a compact object in $D_{qcoh}(Y)$ and for any perfect complex $E$, the derived global sections $C(Y, E)$ is perfect. 

\item An \bfem{$\cO$-orientation of dimension $d$} is a map $[Y] \colon C(Y, \cO_Y) \to \K[-d]$ that induces non-degenerate pairings.

\item For a morphism $f \colon Y \to W$ between $\cO$-compact derived stacks, a \bfem{boundary structure} is a path $\gamma$ from $f_*[Y]$ to $0$ in $\Map(C(W, \cO_W), \K[-d])$. A boundary structure is \bfem{non-degenerate} if it induces an equivalence $C(W, E^\vee) \xrightarrow{\sim} C(W, E)^\perp$ for any perfect complex $E$, where the orthogonal complement is the fiber of the pairing against $f^*E^\vee$. 

\item A \bfem{derived filling} is an $\cO$-orientation on $Y$ equipped with a non-degenerate boundary structure on $f \colon Y \to W$.
    \end{enumerate}
\end{definition}

\subsection{{The AKSZ construction in the derived symplectic setup}} 

With the categorical notions of orientation and boundary structures established, we can review the integration procedure they support. The following construction is well-known in differential geometry and provides, together with \cite[Theorem 2.5]{PTVV}, heuristics for our constructions in the upcoming sections.

Let $M$ be a compact $C^{\infty}$-manifold of dimension $m$, $N$ be a $C^{\infty}$-manifold, and $Map(M,N)$ the Fr\'echet manifold of $C^{\infty}$-maps from $M$ to $N$. The following diagram $$
\begin{tikzcd}[row sep=3em, column sep=3em]
& M \times \Map(M, N) \arrow[dl, "pr_M"'] \arrow[dr, "ev"] & \\
M & & N
\end{tikzcd}
$$
induces a map on differential forms $$\Omega_{M}^{p} \times \Omega_{N}^{q} \rightarrow \Omega_{Map_{C^{\infty}}(M,N)}^{p+q-m} : (\alpha, \beta) \mapsto \int_{M} pr^{*}_{M}\alpha \wedge ev^{*} \beta, $$ 
where $\int_{M}$ denotes \bfem{integration along the fiber} and $pr^*_M(\alpha) \wedge ev^{*}\beta \, \in \Omega_{M \times Map(M,N)}^{p+q}$. The so-called \textit{AKSZ construction (or transgression operation)}  asserts that if, in particular, $(N,\omega)$ is symplectic, then we consider the form $pr^*_M({\rm id}) \wedge ev^{*}\omega \, \in \Omega_{M \times Map_{C^{\infty}}(M,N)}^{0+2}$ such that the evaluation  $$\int_M pr^*_M({\rm id}) \wedge ev^{*}\omega \ \in \Omega_{Map(M,N)}^{2-m}$$ defines a symplectic form on $Map_{C^{\infty}}(M,N).$ Introducing suitable counterparts, this idea
can be translated into the derived context.

In the setting of derived algebraic geometry, one needs a replacement for Poincar\'e duality and for the notion of an orientation. The first one is given by Serre duality (in the more general context of Calabi-Yau categories), while the second one will be that of $\mathcal{O}$\emph{-orientation}. The notion of $\mathcal{O}$-orientation will allow for a quasi-coherent variant of the concept \emph{integration along the fiber} \cite{PTVV}.

Recall from \cite{PTVV} that for any derived stack $X$ and any $\mathcal{O}$-compact derived stack $Y$, there is a natural 
a commutative square of graded complexes
\[
\begin{tikzcd}
NC^{w}(X\times Y) \arrow[r, "\kappa_{X,Y}"] \arrow[d] 
  & NC^{w}(X) \otimes_{k} C(Y,\mathcal{O}_{Y}) \arrow[d] \\
DR(X\times Y) \arrow[r, "\kappa_{X,Y}"'] 
  & DR(X) \otimes_{k} C(Y,\mathcal{O}_{Y}),
\end{tikzcd}
\]
where the vertical morphisms are the projections $NC^{w} \longrightarrow  {DR}$ and the horizontal ones are induced by PTVV’s (affine) Künneth formula and then by projecting to the function part.  

We then have the following formal definition:
\begin{definition}\cite[Definition 2.3]{PTVV}
Let $X$ and $Y$ be derived stacks, with $Y$ being $\mathcal{O}$-compact, and 
let $\eta : C(Y,\mathcal{O}_{Y}) \longrightarrow k[d]$ be a morphism
for some integer $d$.
The \bfem{integration map along $\eta$} is the morphism 
\[
\int_{\eta} : 
\begin{tikzcd}[column sep=large]
DR(X\times Y) \arrow[r, "\kappa_{X,Y}"] 
  & DR(X)\otimes_{k} C(Y,\mathcal{O}_{Y}) \arrow[r, "id\otimes \eta"] 
  & DR(X)[d].
\end{tikzcd}
\]
This map commutes with all the structure (along with the corresponding morphisms in terms of $NC^{w}(-)$ on the level of mixed graded complexes).
\end{definition}

We are now in a position to outline the AKSZ procedure: let $X$ be a derived Artin stack  equipped with
an $n$-shifted symplectic form $\omega \in \mathsf{Symp}(X,n)$, 
$Y$ be an $\mathcal{O}$-compact derived stack equipped with an $\mathcal{O}$-orientation $[Y] : C(Y,\mathcal{O}_{Y}) \longrightarrow k[-d]$ of degree
$d$, and denote by \(
ev : Y\times \mathsf{Map}(Y,X) \longrightarrow X
\) the evaluation morphism. 

Note that since  
\(
\omega \in \mathsf{Symp}(X,n) \subset \mathcal{A}^{2,cl}(X,n),
\) the underlying 2-form corresponds to a morphism of graded complexes
\[
\omega : k[2-n](2) \longrightarrow NC^{w}(X).
\]The integration along the orientation $[Y]$ then yields the composition:
\[
\int_{[Y]}\omega :=  \int_{[Y]} \circ  \ ev^* \circ \omega : k[2-n](2) \longrightarrow NC^{w}(\mathsf{Map}(Y,X))[-d],
\] 
which is, by definition, a closed $2$-form of degree $(n-d)$ on $\mathsf{Map}(Y,X)$, and hence
\[
\int_{[Y]}\omega \in \mathcal{A}^{2,cl}(\mathsf{Map}(Y,X),n-d).
\]

The non-degeneracy of such a form
follows directly from the non-degeneracy of the orientation, in the special case of a symplectic pairing. For details, we refer to \cite[Theorem 2.5]{PTVV}. In brief, the discussion above proves:
\begin{theorem}[PTVV's AKSZ construction] 
The derived mapping stack $\mathsf{Map}(Y,X)$ of a $d$-oriented stack $Y$ into an $n$-shifted symplectic stack $X$ admits an $(n-d)$-shifted symplectic structure.    
\end{theorem}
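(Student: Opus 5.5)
The statement to prove is PTVV's AKSZ theorem: $\Map(Y,X)$ carries an $(n-d)$-shifted symplectic structure. The argument has two parts. The preceding discussion already produces a closed $2$-form $\int_{[Y]}\omega \in \mathcal{A}^{2,cl}(\Map(Y,X), n-d)$. So what remains is: (i) checking that $\Map(Y,X)$ is a derived stack with a reasonable cotangent complex, and (ii) showing that the underlying $2$-form is non-degenerate. For (i), we use that $X$ is derived Artin (locally finitely presented) and $Y$ is $\cO$-compact. This lets us identify the tangent complex at a point $f\colon \Spec A \to \Map(Y,X)$:
\[
\TT_{\Map(Y,X)}|_f \simeq C(Y_A, f^*\TT_X),
\]
where $Y_A = Y \times \Spec A$ and we use the adjoint map $f\colon Y_A \to X$. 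This follows from the universal property of the mapping stack applied to square-zero extensions $A \oplus M$. Since $\TT_X$ is perfect and $Y$ is $\cO$-compact, $C(Y_A, f^*\TT_X)$ is perfect over $A$. Hence $\Map(Y,X)$ has a perfect cotangent complex, namely the $A$-linear dual.

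For (ii), the key step is to compute the underlying $2$-form of $\int_{[Y]}\omega$ pointwise, using the definition of $\int_\eta$ through the Künneth map $\kappa$. Projecting to weight $2$ and applying $ev^*$, we get the form $ev^*\omega$ on $Y \times \Map(Y,X)$. This form pairs $ev^*\TT_X \otimes ev^*\TT_X \to \cO[n]$. Restricted along $f$, its Künneth component is the composite
\[
C(Y_A, f^*\TT_X)\otimes_A C(Y_A, f^*\TT_X) \to C(Y_A, f^*\TT_X \otimes f^*\TT_X) \xrightarrow{\omega} C(Y_A,\cO)[n] \xrightarrow{[Y]} A[n-d].
\]
We verify this by naturality of $\kappa$ and of the evaluation in $A$. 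It reduces to the observation that the tangent vector $C(Y_A, f^*\TT_X)$ enters through $ev^*$, and that the $2$-form part of $\kappa$ is cup product followed by the function part.

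Non-degeneracy then follows. The map $\Theta$ factors as
\[
C(Y_A, f^*\TT_X) \xrightarrow{\Theta_\omega} C(Y_A, f^*\LL_X[n]) \simeq C(Y_A, (f^*\TT_X)^\vee)[n] \xrightarrow{\sim} C(Y_A, f^*\TT_X)^\vee[n-d].
\]
The first arrow is an equivalence because $\omega$ is symplectic. The last is the duality equivalence $C(Y, E^\vee) \simeq C(Y,E)^\vee[-d]$, which is precisely the non-degeneracy of the $\cO$-orientation applied to the perfect complex $E = f^*\TT_X$, base changed to $A$.

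I expect the main obstacle to be the identification of the underlying form of $\int_{[Y]}\omega$ with this cup-product pairing. It requires unwinding PTVV's Künneth map $\kappa_{X,Y}$ on weight-$2$ pieces and checking compatibility with $\TT_{\Map}$. A secondary issue is base change of the orientation's non-degeneracy from $\K$ to arbitrary $A$. I would handle that by first reducing to the case of field-valued points, since an equivalence of perfect complexes can be detected there, and then invoking $\cO$-compactness to get $C(Y_A,E) \simeq C(Y,E)\otimes A$ for $E$ pulled back from $Y$, with a devissage argument for general $E$.
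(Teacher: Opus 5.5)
Your proposal is correct and follows essentially the same argument as the paper. The paper constructs the closed form $\int_{[Y]}\omega$ and defers non-degeneracy to \cite[Theorem 2.5]{PTVV}, and that proof is precisely your identification of the underlying $2$-form at each point $f\colon \Spec A\to\Map(Y,X)$ with the pairing on $C(Y_A,f^*\TT_X)$, followed by the symplectic and orientation dualities. Two small caveats: PTVV assume as a hypothesis that $\Map(Y,X)$ is derived Artin and locally finitely presented (your step (i) gives a perfect cotangent complex, not geometricity), and their definition of an $\cO$-orientation already requires non-degeneracy for every $A$ and every perfect complex on $Y_A$, so the base-change/d\'evissage step you anticipate is unnecessary.
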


In what follows, we aim to establish the contact counterpart of that result, using the equivariant quotient approach introduced in \cite{IzbudakBerktav2026}.

\section{Contact AKSZ Construction}\label{sec:contact_aksz}

While the symplectic AKSZ procedure operates directly on the derived mapping stack, extending this formalism to the contact setting requires geometric modifications. We first identify the homological obstruction on the standard mapping stack and then construct the quotient mapping stack to resolve it.

\subsection{Obstructions}

Let $Y$ be an $\cO$-compact derived stack equipped with an $\cO$-orientation of dimension $d$. We investigate the geometric structures on mapping spaces into an $n$-shifted contact derived Artin stack $X$.

The standard mapping stack $\Map(Y, X)$ does not admit a contact structure. A contact structure on $\Map(Y, X)$ requires a twisting line bundle. Let $\cL_X$ be the contact line bundle on $X$. The required line bundle must be constructed via the correspondence defined by the projection and evaluation morphisms
$$
\begin{tikzcd}[row sep=3em, column sep=3em]
& Y \times \Map(Y, X) \arrow[dl, "p_2"'] \arrow[dr, "ev"] & \\
\Map(Y, X) & & X
\end{tikzcd}
$$\\

The derived pushforward complex is defined as $\mathbb{R}p_{2*} ev^* \cL_X$. The fiber of this derived pushforward over a map $f \in \Map(Y, X)$ is the cohomology complex $\mathbb{R}\Gamma(Y, f^*\cL_X)$. For a $d$-oriented derived stack $Y$, this perfect complex has amplitude $[0, d]$. A contact structure requires a global twisting line bundle, which must be a perfect complex of amplitude $[0, 0]$ and constant rank 1. Since the derived pushforward yields a complex with non-trivial higher cohomology groups, it fails to be a line bundle. As such, the mapping stack $\Map(Y, X)$ cannot admit a contact structure.

We provide an explicit counterexample to demonstrate that this obstruction is an intrinsic cohomological property of the setup, and not a fluctuation across connected components.

\begin{observation}[Betti Stacks] 
    Let $Y$ be the Betti stack $M_B$ of a compact oriented $d$-manifold where $d \ge 1$. Evaluate the derived pushforward over a constant map $f \colon M_B \to X$ targeting a single point in the contact stack. The pullback $f^*\cL_X$ is isomorphic to the constant sheaf. The derived pushforward $\mathbb{R}\Gamma(M_B, f^*\cL_X)$ computes the standard singular cohomology $C^*(M, \K)$. Since $M$ is a compact, oriented $d$-manifold, its top cohomology group $H^d(M, \K)$ is non-zero by Poincar\'{e} Duality. Consequently, the pushforward complex has an absolute Tor-amplitude of $[0,d]$. The presence of non-vanishing higher cohomology violates the $[0,0]$ amplitude requirement, thus it is impossible to define a line bundle globally over the component of constant maps.
\end{observation}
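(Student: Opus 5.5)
The plan is to compute $\mathbb{R}\Gamma(M_B, f^*\cL_X)$ directly for a constant map $f$, and then use Poincar\'e duality to show that this complex has nonzero cohomology in degree $d$.

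First I would set up the constant map. Fix a $\K$-point $x \colon \Spec\K \to X$. Let $f$ be the composite $M_B \to \Spec\K \xrightarrow{x} X$, where the first map is the structure map $q \colon M_B \to \Spec \K$. Then
\[
f^*\cL_X \simeq q^* x^*\cL_X.
\]
Since $\K$ is algebraically closed and $x^*\cL_X$ is a line over $\Spec\K$, we can trivialize it non-canonically: $x^*\cL_X \simeq \K$. It follows that $f^*\cL_X \simeq q^*\K \simeq \cO_{M_B}$.

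Second, I would identify the global sections. Quasi-coherent complexes on the Betti stack are modules over the constant sheaf, so
\[
\mathbb{R}\Gamma(M_B, \cO_{M_B}) \simeq C^*(M,\K),
\]
the singular cochains with coefficients in $\K$. Concretely, one writes $M_B$ as the colimit of the constant diagram on the simplicial set of singular simplices of $M$. Taking global sections converts this colimit into a limit of copies of $\K$, and that limit computes $C^*(M,\K)$. This is the standard computation used for Betti stacks in \cite{PTVV}.

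Third, I would read off the amplitude. Since $M$ is compact and oriented of dimension $d$, Poincar\'e duality gives $H^d(M,\K) \cong H_0(M,\K) \neq 0$, and $H^i(M,\K) = 0$ for $i<0$ and for $i>d$. So the complex has cohomology in degree $d \geq 1$ and cannot have amplitude $[0,0]$. This complex is the fiber of $\mathbb{R}p_{2*}ev^*\cL_X$ at the point $f$, because base change holds for the $\cO$-compact stack $Y = M_B$. Therefore the pushforward is not a line bundle on any neighbourhood of the constant maps, so not on the whole component containing them. The argument does not depend on which point $x$ is chosen, which confirms that the obstruction is not an artifact of a particular component.

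The step I expect to be the main obstacle is justifying that base change along $f \colon \Spec\K \to \Map(M_B,X)$ commutes with $\mathbb{R}p_{2*}$. I would handle it by using $\cO$-compactness of $M_B$: $\mathbb{R}\Gamma(M_B,-)$ is perfect and commutes with colimits, and the projection formula then gives the identification of fibers.
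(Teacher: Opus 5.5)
Your proposal is correct and follows the same route as the paper. Factor the constant map through a point so that $f^*\cL_X$ is the trivial local system, identify $\mathbb{R}\Gamma(M_B,\cO_{M_B})$ with $C^*(M,\K)$, and use Poincar\'e duality to get $H^d(M,\K)\neq 0$ with $d\geq 1$. The only addition is your justification that this complex is the fiber of $\mathbb{R}p_{2*}ev^*\cL_X$ at $f$ (base change via $\cO$-compactness), which the paper asserts without comment.
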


One might attempt to bypass this topological obstruction using formal localization. Tomi\'{c} establishes in \cite{Tomic} that an $n$-shifted Poisson structure on $X$ is equivalent to an $(n+1)$-shifted Lagrangian thickening $X \to X^{symp}$ \cite[Theorem 4.1]{Tomic}. The AKSZ construction proceeds by applying the mapping stack functor to this thickening and taking the formal completion. 

For a shifted contact structure, the analogous approach requires lifting the geometry to an $(n+1)$-shifted Legendrian thickening. The mapping stack functor produces a morphism $\Map(Y, X) \to \Map(Y, X^{symp})$. The domain of this morphism is the standard mapping stack $\Map(Y, X)$. The standard mapping stack fails to support the required twisting line bundle due to the $[0, d]$ amplitude of the derived pushforward. Formal completion along this morphism cannot recover a contact structure since the base space lacks the necessary global line bundle. The geometric obstruction occurs before any formal neighborhood is analyzed. The lifting and descent mechanism over the de Rham prestack fails for contact structures.

This algebraic obstruction aligns with classical contact topology. In Symplectic Field Theory, moduli spaces of holomorphic curves in a contact manifold $M$ are not constructed from the mapping space $\Map(Y, M)$. Maps are taken into the symplectification $\mathbb{R} \times M$ \cite[Section 1]{EGH}, and the moduli space is defined as the quotient $[\Map(Y, \mathbb{R} \times M) / \mathbb{R}]$, where the group $\mathbb{R}$ acts by translation along the Liouville vector field. This translation corresponds to the scaling of the symplectic form. The quotient mapping stack provides the derived algebraic generalization of this moduli space. 

\subsection{Contact AKSZ via The Quotient Mapping Stack}
Motivated by the topological analogy above, we propose to resolve the derived obstruction by formalizing the quotient mapping stack as
$$
    X_Y = [\Map(Y, \widetilde{X})/\Gm],
$$ 
where $\widetilde{X} \to X$ is a principal $\Gm$-bundle equipped with a weight 1 $n$-shifted symplectic structure $\omega_{\widetilde{X}}$, that is, the derived symplectification of $X$.
 Here, taking the quotient by $\Gm$ constructs a principal $\Gm$-bundle over $X_Y$ that defines a twisting line bundle $\cO(1)$ on $X_Y$. Often denoting the derived mapping stack $\Map(Y, \widetilde{X})$ by $\widetilde{M}$, we more precisely have:

\begin{theorem}\label{thm:quotient_contact}
If $X$ is an $n$-shifted contact derived Artin stack and $Y$ is an $\cO$-compact, $d$-oriented derived stack, the quotient mapping stack $X_Y = [\Map(Y, \widetilde{X})/\Gm]$ admits an $(n-d)$-shifted contact structure.
\end{theorem}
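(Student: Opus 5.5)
The plan is to reduce the statement to two results already available: PTVV's AKSZ construction and the derived transversality Theorem \ref{thm:A}. We first pass from $X$ to its derived symplectification $p\colon\widetilde{X}\to X$, the principal $\Gm$-bundle attached to $\cL_X$. By the discussion preceding Theorem \ref{thm:A}, it carries an $n$-shifted symplectic form $\omega_{\widetilde{X}}$ of weight $1$ for the fiberwise $\Gm$-action. Since $X$ is a derived Artin stack and $p$ is smooth and affine, $\widetilde{X}$ is again a derived Artin stack. We can therefore apply PTVV's AKSZ construction to the $\cO$-compact, $d$-oriented stack $Y$ and to $(\widetilde{X},\omega_{\widetilde{X}})$. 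This makes $\widetilde{M}=\Map(Y,\widetilde{X})$ a derived Artin stack with the $(n-d)$-shifted symplectic form $\omega_{\widetilde{M}}=\int_{[Y]} ev^*\omega_{\widetilde{X}}$.

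Next we let $\Gm$ act on $\widetilde{M}$ by post-composition with the action on $\widetilde{X}$, that is, the constant (diagonal) action. We must check that $\omega_{\widetilde{M}}$ has weight $1$ for this action. Our approach is to work $\Gm$-equivariantly, equivalently over $B\Gm$, following \cite[Section 3]{Calaque2019}. The evaluation map $ev\colon Y\times\widetilde{M}\to\widetilde{X}$ is $\Gm$-equivariant when $\Gm$ acts trivially on $Y$. Hence $ev^*$ respects the fiber grading on $\mathbf{DR}$ and sends $\mathcal{A}^{2,\mathrm{cl},\{1\}}(\widetilde{X},n)$ into $\mathcal{A}^{2,\mathrm{cl},\{1\}}(Y\times\widetilde{M},n)$. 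The integration map $\int_{[Y]}$ is built from the Künneth morphism $\kappa$ and from $\mathrm{id}\otimes[Y]$, and the action on $Y$ is trivial. So $\int_{[Y]}$ is a morphism of graded mixed complexes that preserves the weight $\{w\}$, and $\omega_{\widetilde{M}}$ lands in $\mathcal{A}^{2,\mathrm{cl},\{1\}}(\widetilde{M},n-d)$. Concretely, we would check that the Künneth map for $Y\times\widetilde{M}$ with trivial action on $Y$ is compatible with the weight decompositions, which reduces to functoriality of $\kappa$ in the first variable applied to the action map.

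We then apply Theorem \ref{thm:A} to $(\widetilde{M},\omega_{\widetilde{M}})$ with this weight $1$ $\Gm$-action. This gives an $(n-d)$-shifted contact structure on $[\widetilde{M}/\Gm]=X_Y$. Its twisting line bundle is the $\cO(1)$ associated with the principal $\Gm$-bundle $\widetilde{M}\to X_Y$. The contact form is the descent of the contraction of $\omega_{\widetilde{M}}$ with the Euler vector field of the action.

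The main obstacle is a hypothesis of Theorem \ref{thm:A} hidden in the word ``action''. The quotient has to be well behaved: the action on $\widetilde{M}$ should be free enough for $\widetilde{M}\to X_Y$ to be a genuine $\Gm$-torsor and for the quotient to be a derived Artin stack. Constant maps, which can be fixed by nothing but are stabilized only up to homotopy, need care. Note that $X_Y$ is a quotient stack of an Artin stack by a smooth group, so it is Artin. The torsor $\widetilde{M}\to[\widetilde{M}/\Gm]$ is automatic for stack quotients, so freeness is not actually needed. What remains to verify is only the equivariant-weight bookkeeping in the second step, which we expect to be the technical heart of the argument.
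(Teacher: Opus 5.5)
Your proposal is correct and follows essentially the same route as the paper. You pass to the weight-$1$ symplectification $\widetilde{X}$, transgress $\omega_{\widetilde{X}}$ to $\Map(Y,\widetilde{X})$ via PTVV, and note that the constant $\Gm$-action makes $ev$ equivariant (with $\Gm$ acting trivially on $Y$), so that $\int_{[Y]}ev^*$ preserves weight $1$; you then descend with Theorem~\ref{thm:A}.

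One small correction, which applies equally to the paper's proof: PTVV's Theorem~2.5 assumes, rather than proves, that $\Map(Y,\widetilde{X})$ is a derived Artin stack locally of finite presentation, so your sentence claiming PTVV makes it one should be stated as an assumption.
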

\begin{proof}
By \cite[Theorem 4.2]{IzbudakBerktav2026}, the $n$-shifted contact structure on $X$ defines a principal $\Gm$-bundle $p \colon \widetilde{X} \to X$ equipped with a weight 1 $n$-shifted symplectic structure $\omega_{\widetilde{X}}$. We apply the derived mapping stack functor to this principal bundle, yielding the derived stack $\widetilde{M} = \Map(Y, \widetilde{X})$. By \cite[Theorem 2.5]{PTVV}, $\widetilde{M}$ admits an $(n-d)$-shifted symplectic structure $\omega_{\widetilde{M}}$ obtained via transgression along the evaluation morphism $ev \colon Y \times \widetilde{M} \to \widetilde{X}$. The transgressed symplectic form is given by $\omega_{\widetilde{M}} = \int_{[Y]} ev^* \omega_{\widetilde{X}}$.

The $\Gm$-action on $\widetilde{X}$ induces an action of the mapping group stack $\mathcal{G} = \Map(Y, \Gm)$ on $\widetilde{M}$. The constant maps from $Y$ to $\Gm$ define an embedding of derived group stacks $\Gm \hookrightarrow \mathcal{G}$. The restriction of the $\mathcal{G}$-action to this subgroup endows $\widetilde{M}$ with a pointwise $\Gm$-action. This subgroup acts on maps $f \colon Y \to \widetilde{X}$ by global scalar multiplication on the target space fibers. 

The symplectic form $\omega_{\widetilde{M}}$ must be an eigenform of weight 1 with respect to the induced $\Gm$-action. The transgression functional $\int_{[Y]}$ evaluates over the intrinsic orientation class of the derived stack $Y$. The fundamental class of $Y$ is invariant under the constant $\Gm$-action. Since $\omega_{\widetilde{X}}$ has weight 1 on $\widetilde{X}$, the pullback $ev^* \omega_{\widetilde{X}}$ has weight 1 on $Y \times \widetilde{M}$. Because the $\Gm$-action operates on the target fibers of the evaluation map, it commutes with the linear integration functional over the $\cO$-orientation. Integration over the invariant fundamental class of $Y$ preserves the scaling weight. Thus, $\omega_{\widetilde{M}}$ is a weight 1 $(n-d)$-shifted symplectic form.

We then conclude that the derived mapping stack $\widetilde{M} = \Map(Y, \widetilde{X})$ carries an $(n-d)$-shifted symplectic structure equipped with a weight 1 $\Gm$-action. From \cite{IzbudakBerktav2026} the quotient of a shifted symplectic stack by a weight 1 $\Gm$-action descends the symplectic structure to a contact structure on the quotient stack. This process is governed by the $\Gm$-equivariant descent square mapping the symplectified evaluation space to the contact base. 

The derived transversality theorem \cite[Theorem 5.1]{IzbudakBerktav2026} guarantees that this descent transfers the geometric data via the principal bundles. Therefore, the $\Gm$-equivariant descent endows the quotient mapping stack $X_Y = [\Map(Y, \widetilde{X})/\Gm]$ with an $(n-d)$-shifted contact structure.
\end{proof}

We then have the following immediate corollary encoding the relation between the operations of derived symplectification and taking the mapping stack.

\begin{corollary}
The operations of derived symplectification and taking the mapping stack commute. Specifically, the derived symplectification of the quotient mapping stack $X_Y$ is canonically equivalent to the derived mapping stack into the symplectification of $X$
$$
    \widetilde{X_Y} \simeq \Map(Y, \widetilde{X}).
$$
\end{corollary}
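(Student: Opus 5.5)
The plan is to identify the derived symplectification of $X_Y$ directly from the construction in Theorem \ref{thm:quotient_contact}, rather than from any intrinsic description of $X_Y$. By definition, the symplectification of an $(n-d)$-shifted contact stack is the total space of the principal $\Gm$-bundle attached to its contact line bundle. In the proof of Theorem \ref{thm:quotient_contact}, the contact structure on $X_Y$ comes from the derived transversality theorem (Theorem \ref{thm:A}) applied to $\widetilde{M} = \Map(Y, \widetilde{X})$. Its twisting line bundle is $\cO(1)$, associated with the quotient map $q \colon \widetilde{M} \to [\widetilde{M}/\Gm] = X_Y$.

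The first step is to check that $q$ is a principal $\Gm$-bundle. This requires the constant $\Gm$-action on $\widetilde{M}$ to be free in the derived sense. That holds because it is induced from the free fiberwise action on $\widetilde{X} \to X$: a map $f \colon Y \to \widetilde{X}$ is fixed by $\lambda \in \Gm$ only if $\lambda$ fixes every point of the image. More precisely, the action map $\Gm \times \widetilde{M} \to \widetilde{M} \times \widetilde{M}$ is the mapping-stack image of a monomorphism and therefore remains one. Then $\widetilde{M} \to [\widetilde{M}/\Gm]$ is a $\Gm$-torsor by the general theory of quotient stacks. Its associated line bundle is $\cO(1)$ by construction, so $\widetilde{X_Y} \simeq \widetilde{M}$ as $\Gm$-torsors over $X_Y$.

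The second step is to match the symplectic data, since the symplectification also carries its weight 1 symplectic form. By \cite[Theorem 4.2]{IzbudakBerktav2026}, the symplectification functor and the transversality quotient of Theorem \ref{thm:A} are mutually inverse, up to equivalence, between weight 1 symplectic $\Gm$-stacks and contact stacks. The contact structure on $X_Y$ was obtained by descending the weight 1 form $\omega_{\widetilde{M}} = \int_{[Y]} ev^*\omega_{\widetilde{X}}$. Applying symplectification to it therefore returns $(\widetilde{M}, \omega_{\widetilde{M}})$ with its constant $\Gm$-action. Naturality in $Y$ and $X$ follows because every step (mapping stack, transgression, quotient) is functorial, so the equivalence is canonical.

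I expect the main obstacle to be this inverse-equivalence step. One must know that the equivalence of \cite{IzbudakBerktav2026} is an equivalence of $\infty$-groupoids of structures, not merely a bijection on objects, so that no auxiliary choice of twisting isomorphism enters. I would first verify it at the level of line bundles, where $\cL$ is recovered from the torsor tautologically. I would then use that the weight 1 form on a $\Gm$-torsor is determined by its descended twisted $1$-form $\alpha$ via $\omega = d(t\,p^*\alpha)$, with $t$ the fiber coordinate, so the forms agree once the torsors and the $1$-forms agree.
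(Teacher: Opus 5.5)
Your main line is the paper's proof. The symplectification of $X_Y$ is by definition the $\Gm$-torsor of its contact line bundle. Since $\cO(1)$ and the contact form on $X_Y$ were built from the quotient map $q\colon\widetilde M\to X_Y$ and the weight 1 form $\omega_{\widetilde M}$, one reads off $\widetilde{X_Y}\simeq\widetilde M$. Your second step matches the forms, using that a weight 1 closed form is recovered as $d_{dR}\iota_E\omega$ from the descended twisted $1$-form. It is a more explicit version of the paper's one-line appeal to the ``universal property'' of the symplectification. The paper relies, without comment, on the same symplectification/quotient correspondence that you flag as the main obstacle.

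You should delete the freeness part of your first step: it is unnecessary and its justification is wrong. For any $\Gm$-action on a derived stack $\widetilde M$, the projection $\widetilde M\to[\widetilde M/\Gm]$ is a $\Gm$-torsor. The quotient is the colimit of the action groupoid, and groupoid objects in an $\infty$-topos are effective. Hence $\widetilde M\times_{[\widetilde M/\Gm]}\widetilde M\simeq\Gm\times\widetilde M$ regardless of stabilizers (compare $\mathrm{pt}\to B\Gm$ for the trivial action).

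Your argument for freeness also fails on its own terms. Since $\Map(Y,\Gm\times\widetilde X)\simeq\Map(Y,\Gm)\times\widetilde M$, the constant action map $\Gm\times\widetilde M\to\widetilde M\times\widetilde M$ is not the mapping-stack image of the action map of $\widetilde X$. It is that image restricted along the constant-map inclusion $\Gm\to\Map(Y,\Gm)$. Moreover, the monomorphism claims fail in derived geometry:
\begin{itemize}
\item $\Gm\times\widetilde X\to\widetilde X\times\widetilde X$ is in general not a monomorphism. It is the base change of the diagonal of $X$ along the effective epimorphism $\widetilde X\times\widetilde X\to X\times X$, and the self-intersection of that diagonal is the derived loop stack of $X$, not $X$.
\item $\Gm\to\Map(Y,\Gm)$ need not be a monomorphism either. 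For instance, $\Gm\to\Map(S^1_B,\Gm)\simeq T[-1]\Gm$ is a zero section whose self-intersection is $T[-2]\Gm$.
\end{itemize}
Replacing the freeness argument with the general torsor statement repairs the step, and the rest of your proof then goes through as in the paper.
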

\begin{proof}
By definition, the derived symplectification $\widetilde{X_Y}$ is the total space of the principal $\Gm$-bundle associated with the contact structure on $X_Y$. The quotient mapping stack is defined as $X_Y = [\Map(Y, \widetilde{X})/\Gm]$. The canonical projection $\Map(Y, \widetilde{X}) \to X_Y$ is the presentation of this quotient as a principal $\Gm$-bundle. In the proof of the preceding theorem, the $(n-d)$-shifted contact structure on $X_Y$ is constructed by descending the weight 1 transgressed symplectic form $\omega_{\widetilde{M}}$ along this bundle. Therefore, $\Map(Y, \widetilde{X})$ satisfies the universal property of the derived symplectification of $X_Y$.
\end{proof}

\subsection{Weak Contact Structures on the Unmodified Stack}\label{sec:weak_contact}

Although the quotient mapping stack provides the proper setup for ``strong'' contact structures, the unmodified stack can still capture a relaxed version of this geometry under specific assumptions. In order to relate our constructions to recent developments in graded contact geometry, we introduce the following definition.

\begin{definition}
A \bfem{weak $n$-shifted contact structure} on a derived stack $M$ consists of a twisting line bundle $\cL_M$ and an $n$-shifted 1-form $\alpha_M$, such that the induced distribution $\cK_M \simeq \mathrm{fib}(\TT_M \xrightarrow{\alpha^\vee} \cL_M[n])$ defines a morphism (via $d_{dR}\alpha_M$) $$\Phi \colon \cK_M \to \cK_M^\vee \otimes \cL_M[n]$$  that is not an equivalence. 

With this terminology in hand, when we say \bfem{strong} in place of weak, we actually mean genuine shifted contact structures in the sense described in Preliminaries.
\end{definition}

In the graded AKSZ-contact formalism \cite[Section 3]{CMM}, the space of fields is constructed on the unmodified mapping space $\Map(Y, X)$. This is achieved by imposing a restrictive topological assumption. The contact line bundle on the target is globally trivializable ($\cL_X \simeq \cO_X$). We mirror this construction in the derived setup to formalize why it yields a weak structure.

\begin{theorem}\label{prop:weak_contact}
Let $X$ be an $n$-shifted contact derived Artin stack such that its contact line bundle is globally trivializable ($\cL_X \simeq \cO_X$). For any $\cO$-compact, $d$-oriented derived stack $Y$, the unmodified mapping stack $M = \Map(Y, X)$ admits a weak $(n-d)$-shifted contact structure.\end{theorem}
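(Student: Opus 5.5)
The plan is to build the three pieces of data in the definition of a weak structure (line bundle, $1$-form, and the morphism $\Phi$) directly on $M = \Map(Y,X)$. I will not try to prove non-equivalence in general. The definition phrases this as the failure of $\Phi$ to be an equivalence, and I read that clause as saying $\Phi$ is not required to be one. I would state this reading explicitly and note only briefly where non-degeneracy is lost.

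\textbf{Step 1: the line bundle.} Since $\cL_X \simeq \cO_X$, pullback gives $ev^*\cL_X \simeq \cO_{Y\times M}$ under the correspondence $M \xleftarrow{p_2} Y\times M \xrightarrow{ev} X$. So instead of the pushforward $\mathbb{R}p_{2*}ev^*\cL_X$, which is the obstructed object of amplitude $[0,d]$ from the Obstructions subsection, I take $\cL_M := \cO_M$. This is a genuine line bundle, and it is exactly the trivialized bundle used in \cite{CMM}.

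\textbf{Step 2: the $1$-form.} The trivialization turns the contact form $\alpha$ into an honest $n$-shifted $1$-form $\alpha\in\mathcal{A}^1(X,n)$. I pull it back to get $ev^*\alpha$, a $1$-form of degree $n$ on $Y\times M$. Then I integrate along the orientation, $\alpha_M := \int_{[Y]} ev^*\alpha$, using \cite[Definition 2.3]{PTVV} at the level of $\mathbf{DR}$ (weight $1$ part). This lands in $\mathcal{A}^1(M,n-d)$ because $[Y]\colon C(Y,\cO_Y)\to\K[-d]$ lowers degree by $d$. Its adjoint is
\[
\alpha_M^\vee\colon \TT_M \to \cO_M[n-d].
\]
Pointwise at $f$, this is the composite $C(Y,f^*\TT_X)[\ldots]\to C(Y,\cO_Y)[n]\xrightarrow{[Y]}\K[n-d]$, using $\TT_M|_f \simeq C(Y,f^*\TT_X)$. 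I define $\cK_M := \mathrm{fib}(\alpha_M^\vee)$.

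\textbf{Step 3: the morphism $\Phi$.} The transgression commutes with $d_{dR}$ because integration is a morphism of graded mixed complexes. Hence $d_{dR}\alpha_M = \int_{[Y]} ev^* d_{dR}\alpha$ is a closed $2$-form of degree $n-d$. Its induced map $\TT_M\to\LL_M[n-d]$ is restricted along $\cK_M\to\TT_M$ and composed with the dual map $\LL_M\to\cK_M^\vee$. Since $\cL_M$ is trivial, this gives $\Phi\colon\cK_M\to\cK_M^\vee\otimes\cL_M[n-d]$, which completes the data.

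\textbf{Main obstacle.} The delicate point is to make $\Phi$ well defined as a map out of the fiber. Doing so needs a homotopy showing that the composite $\cK_M\to\TT_M\to\LL_M[n-d]\to\cO_M[\ldots]$ is compatible with $\alpha_M$. I would obtain this by transgressing the corresponding homotopy on $X$, which exists because $\alpha$ is contact there, through the functorial map $\int_{[Y]}\circ ev^*$. Finally, to explain why only a weak structure results, I would point out that the fiber over $f$ of $\cK_M$ is the fiber of $C(Y,f^*\TT_X)\to C(Y,\cO_Y)[n]\to\K[n-d]$, not $C(Y,f^*\cK_X)$. The discrepancy is governed by $\mathrm{fib}([Y])$, which has Tor-amplitude coming from $C(Y,\cO_Y)$ in $[0,d]$. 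So the $[Y]$-duality that makes the PTVV pairing non-degenerate on $C(Y,f^*\TT_X)$ no longer restricts to a perfect pairing on $\cK_M$, and nothing stronger than a weak structure should be expected.
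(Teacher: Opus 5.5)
Your data are exactly the paper's: $\cL_M=\cO_M$, $\alpha_M=\int_{[Y]}ev^*\alpha$, $\cK_M=\mathrm{fib}(\alpha_M^\vee)$, and $\Phi$ induced by $d_{dR}\alpha_M=\int_{[Y]}ev^*d_{dR}\alpha$. The gap is the clause you explicitly set aside. In the paper's definition, a weak structure is one whose $\Phi$ is \emph{not} an equivalence. That clause is what ``weak'' means, and the second half of the paper's proof is devoted to it. There, the pasting law gives a fiber sequence $\mathbb{R}p_{2*}ev^*\cK_X\to\cK_M\to\mathrm{fib}(\int_{[Y]})[n]$. The first term carries the perfect pairing built from $\cK_X\simeq\cK_X^\vee[n]$ and $[Y]$-duality, and the non-zero third term is supposed to make $\Phi$ degenerate. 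If you read the clause as ``$\Phi$ need not be an equivalence'', the theorem reduces to the existence of a $1$-form with trivial twist, which a genuinely contact $M$ would also satisfy. Your final paragraph records the paper's heuristic only as something that ``should be expected''. Separately, your `main obstacle' is not one: $\Phi$ is the composite $\cK_M\to\TT_M\to\LL_M[n-d]\to\cK_M^\vee[n-d]$, whose last arrow is the dual of the inclusion $\cK_M\to\TT_M$, so no homotopy is required.

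Be aware that the clause is false in the stated generality. The paper's step ``$\mathrm{fib}(\int_{[Y]})$ is non-zero'' silently uses $d\geq 1$, where $[Y]$ kills $1\in H^0(Y,\cO_Y)$. For $Y=\mathrm{pt}$ with $d=0$ one has $M\simeq X$ and $[Y]=\mathrm{id}$, so $\Phi$ is the contact equivalence of $X$. Under the hypothesis $\mathrm{fib}([Y])\not\simeq 0$ you can prove the clause cleanly, because an equivalence remains one after pullback to a point.

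At a constant map $f\equiv x$ one has $f^*\TT_M\simeq C(Y,\cO_Y)\otimes\TT_{X,x}$. Choose a Reeb splitting $\TT_{X,x}\simeq\cK_{X,x}\oplus\K[n]$ in which $\alpha^\vee$ is the projection and $d_{dR}\alpha$ vanishes against the Reeb summand; this is possible by non-degeneracy on $\cK_{X,x}$. Then
\[
f^*\cK_M\simeq\bigl(C(Y,\cO_Y)\otimes\cK_{X,x}\bigr)\oplus\mathrm{fib}([Y])[n].
\]
On the first summand the transgressed pairing is the $[Y]$-pairing on $C(Y,\cO_Y)$ tensored with $d_{dR}\alpha|_{\cK_{X,x}}$, which is perfect by the orientation. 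On the second summand it is zero. Hence $f^*\Phi$ is an equivalence if and only if $\mathrm{fib}([Y])\simeq 0$. This is the precise form of the paper's ``non-trivial fiber induces degeneracy'', and it is what your proof needs to add, together with the hypothesis excluding the $d=0$ case.
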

\begin{proof}
Because $\cL_X \simeq \cO_X$, the line bundle obstruction is bypassed. The derived pushforward $\mathbb{R}p_{2*} ev^* \cO_X \simeq \cO_M \otimes C(Y, \cO_Y)$ globally trivializes down to $\cO_M$ by utilizing the constant functions, allowing the definition of a global 1-form. The canonical shifted $1$-form $\alpha_X \in \pi_0 \mathbb{R}\Gamma(X, \LL_X[n])$ transgresses via the integration functional $\int_{[Y]}$ to a global $(n-d)$-shifted 1-form $\alpha_M \in \pi_0 \mathbb{R}\Gamma(M, \LL_M[n-d])$.

By base change, the tangent complex evaluates as $\TT_M \simeq \mathbb{R}p_{2*} ev^* \TT_X$. The canonical $1$-form $\alpha_M$ is obtained via the composition
$$ \TT_M \xrightarrow{\mathbb{R}p_{2*} ev^* \alpha_X^\vee} \mathbb{R}p_{2*} ev^* \cO_X[n] \xrightarrow{\int_{[Y]}} \cO_M[n-d]. $$
The derived contact distribution $\cK_M$ is the homotopy fiber of this composition. By the pasting law for exact triangles, $\cK_M$ fits into a fiber sequence
$$ \mathbb{R}p_{2*} ev^* \cK_X \to \cK_M \to \mathrm{fib}\left(\int_{[Y]}\right)[n]. $$
Because the integration map $\int_{[Y]}$ annihilates the cohomology of $Y$ below degree $d$, its homotopy fiber is non-zero. Thus, $\cK_M$ is not equivalent to $\mathbb{R}p_{2*} ev^* \cK_X$. 

The strong contact structure on $X$ yields an equivalence $\cK_X \xrightarrow{\sim} \cK_X^\vee[n]$. Pushing this equivalence forward and applying derived relative Serre duality along the $\cO$-orientation $[Y] \colon C(Y, \cO_Y) \to \K[-d]$ induces a perfect pairing on $\mathbb{R}p_{2*} ev^* \cK_X$. However, the induced pairing morphism on $M$
$$ \Phi \colon \cK_M \to \cK_M^\vee[n-d] $$
factors through the integration map. The non-trivial fiber of the integration map induces a degeneracy in $\Phi$, preventing it from being an equivalence. This establishes that $M$ supports a weak contact structure, recovering the results of \cite[Section 3]{CMM} in the derived setting.

\end{proof}

\subsection{Derived Fillings and Legendrian Morphisms} \label{sec:fillings}

Returning to the quotient mapping stack, we extend the geometric construction to source spaces with boundary. In shifted symplectic geometry, restricting mapping stacks along morphisms equipped with boundary structures yields Lagrangian morphisms. The quotient mapping stack formulation recovers the analogue for contact geometry. More precisely, we have:

\begin{theorem}\label{thm:filling_legendrian}
Let $f \colon Y \to W$ be a derived filling of an $\cO$-compact, $d$-oriented derived stack $Y$. For any $n$-shifted contact derived Artin stack $X$, the relative mapping stack defined by the restriction morphism $\rho \colon X_W \to X_Y$ between the quotient mapping stacks is equipped with an $(n-d)$-shifted Legendrian structure.
\end{theorem}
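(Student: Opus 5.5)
The plan is to lift the statement to the symplectified level, invoke the symplectic AKSZ result with boundary, and then descend along the constant $\Gm$-action, mirroring the proof of Theorem \ref{thm:quotient_contact}.

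First, apply the mapping stack functor to $\widetilde{X}$ and restrict along $f$. This gives a $\Gm$-equivariant morphism
$$\widetilde{\rho} \colon \widetilde{M}_W = \Map(W, \widetilde{X}) \to \widetilde{M}_Y = \Map(Y, \widetilde{X}).$$
Here $\Gm$ acts on both sides through constant maps, which commute with precomposition by $f$. By the relative AKSZ theorem of Calaque (the boundary version of \cite[Theorem 2.5]{PTVV}, as used in \cite{CHS2025}), the non-degenerate boundary structure $\gamma$ on $f$ equips $\widetilde{\rho}$ with an $(n-d)$-shifted Lagrangian structure. Concretely, the null-homotopy is
$$h = \int_\gamma ev_W^*\omega_{\widetilde{X}} \colon \widetilde{\rho}^*\omega_{\widetilde{M}_Y} \simeq 0,$$
obtained by integrating along the path $\gamma$ from $f_*[Y]$ to $0$. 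The non-degeneracy of $\Theta_h$ follows from the equivalence $C(W,E^\vee)\simeq C(W,E)^\perp$ applied to $E = ev_W^*\TT_{\widetilde{X}}$.

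Second, I would show that $h$ is a weight $1$ null-homotopy. The argument is the one given in Theorem \ref{thm:quotient_contact}: $\gamma$ lives purely on the source side, the constant $\Gm$ acts only on the target fibres, and $ev_W^*\omega_{\widetilde{X}}$ has weight $1$. Hence integration along $\gamma$ commutes with the $\Gm$-action and preserves the grading $\{1\}$ on $\mathbf{DR}$. So $\widetilde{\rho}$ is a $\Gm$-equivariant Lagrangian whose isotropic structure has weight $1$.

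Third, I would descend to the quotients, which is the key step. I would establish a relative form of Theorem \ref{thm:A}: a $\Gm$-equivariant Lagrangian $L \to \widetilde{X}$ with weight $1$ isotropic structure descends to a Legendrian $[L/\Gm] \to [\widetilde{X}/\Gm]$. To do this, write $q\colon \widetilde{M}_Y \to X_Y$ and use $q^*\TT_{X_Y}$ together with the Euler fibre sequence $\cO \to \TT_{\widetilde{M}_Y} \to q^*\TT_{X_Y}$ on both sides. Comparing the Lagrangian sequence $\TT_{\widetilde{M}_W} \simeq \LL_{\widetilde{\rho}}[n-d-1]$ with these Euler sequences should yield
$$\cO \to \LL_{X_W/X_Y}\otimes \rho^*\cO(1)[n-d-1] \to \TT_{X_W}.$$
The weight $1$ part of the form is what converts $\LL$ into $\LL\otimes\cO(1)$. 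Since the null-homotopy $h$ is of weight one, it restricts to the contracted $1$-form, giving $\rho^*\alpha_{X_Y}\simeq 0$.

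I expect the main obstacle to be this descent. The Euler-vector-field contributions on source and target must cancel correctly, and one has to verify that the extra $\cO$ term in the fibre sequence is precisely the one produced by the Liouville direction. I would handle it by passing to $\Gm$-equivariant quasi-coherent complexes, i.e. working over $B\Gm$, and taking the weight-$1$ isotypic component. The sequence would then be checked fppf-locally after pulling back along $q$, where the bundles are trivial and the statement reduces to the Lagrangian equivalence already obtained.
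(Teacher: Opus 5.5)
Your proposal follows the paper's proof. In both, the boundary AKSZ theorem makes $\widetilde{\rho}\colon \Map(W,\widetilde{X})\to\Map(Y,\widetilde{X})$ Lagrangian, restriction along $f$ is $\Gm$-equivariant for the constant action, and the equivariant Lagrangian structure then descends to a Legendrian structure on $\rho\colon X_W\to X_Y$. There are two differences: the paper takes the descent step directly from \cite[Theorem 6.2]{IzbudakBerktav2026} rather than re-deriving it from the Euler sequences as you plan, and it leaves implicit the weight-$1$ property of the null-homotopy $\int_\gamma ev_W^*\omega_{\widetilde{X}}$, which you check explicitly.
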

\begin{proof}
For any symplectic stack, derived fillings induce Lagrangian morphisms on the associated mapping stacks. The restriction map between the derived mapping stacks into the derived symplectification $$\widetilde{\rho} \colon \Map(W, \widetilde{X}) \to \Map(Y, \widetilde{X})$$ forms a Lagrangian morphism with respect to the transgressed $(n-d)$-shifted symplectic form on $\Map(Y, \widetilde{X})$.

The restriction map $\widetilde{\rho}$ is $\Gm$-equivariant because the $\Gm$-action defined by point-wise multiplication on the target fibers commutes with the domain inclusion $Y \to W$. Because the Lagrangian structure on $\widetilde{\rho}$ is $\Gm$-equivariant, it descends along the principal $\Gm$-bundle quotients to define a Legendrian morphism between the respective contact base spaces $X_W \to X_Y$, which directly follows from \cite[Theorem 6.2]{IzbudakBerktav2026}.

\end{proof}

Having established that individual derived fillings yield Legendrian morphisms, we can naturally extend this theorem to the composition of boundary conditions, thereby manifesting topological gluing.

\begin{theorem}[Topological Gluing]\label{thm:top glu}
Let $f_1 \colon Y \to W_1$ and $f_2 \colon Y \to W_2$ be two derived fillings of an $\cO$-compact, $d$-oriented derived stack $Y$. For an $n$-shifted contact derived Artin stack $X$, the quotient mapping stack out of the homotopy pushout $W = W_1 \coprod_Y W_2$ is canonically equivalent to the derived intersection of the Legendrian boundary conditions
$$
    X_W \simeq X_{W_1} \times_{X_Y}^h X_{W_2}.
$$
\end{theorem}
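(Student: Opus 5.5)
The plan is to prove the equivalence first at the level of the symplectified mapping stacks, and then descend it along the $\Gm$-quotient. The starting point is that $\Map(-,\widetilde{X})$ sends homotopy colimits in the source to homotopy limits. This holds because $\Map(-,\widetilde{X})$ is the internal hom in $\mathbf{dSt}_\K$, so $\Map(\mathrm{colim}\, Y_i,\widetilde{X}) \simeq \lim \Map(Y_i,\widetilde{X})$. Applied to $W = W_1 \coprod_Y W_2$, this gives a canonical equivalence
$$\Map(W,\widetilde{X}) \simeq \Map(W_1,\widetilde{X}) \times^h_{\Map(Y,\widetilde{X})} \Map(W_2,\widetilde{X}),$$
where the maps are the restriction morphisms $\widetilde{\rho}_1,\widetilde{\rho}_2$ from the proof of Theorem \ref{thm:filling_legendrian}.

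Next I would check $\Gm$-equivariance. The constant $\Gm$ acts on every term by pointwise scaling of target fibers, and this action commutes with precomposition by $f_1$, $f_2$ and by the pushout inclusions $W_i \to W$. The equivalence above is therefore an equivalence of $\Gm$-stacks, where the fiber product carries the diagonal action. Taking quotients, the left side becomes $X_W$. On the right I must show
$$[(A\times^h_C B)/\Gm] \simeq [A/\Gm]\times^h_{[C/\Gm]}[B/\Gm],$$
with $A=\Map(W_1,\widetilde{X})$, $B=\Map(W_2,\widetilde{X})$ and $C=\Map(Y,\widetilde{X})$. To do this, view all quotients as stacks over $B\Gm$. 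The quotient functor $[-/\Gm]$ identifies $\Gm$-stacks with stacks over $B\Gm$, via pullback along $\mathrm{pt}\to B\Gm$. Limits over $B\Gm$ agree with limits computed in $\mathbf{dSt}_\K$ for fiber products of objects over a common base, and the fiber product $[A/\Gm]\times_{[C/\Gm]}[B/\Gm]$ is computed over $B\Gm$ because $[C/\Gm]\to B\Gm$. Pulling back along $\mathrm{pt}\to B\Gm$ commutes with limits and recovers $A\times^h_C B$, which gives the displayed equivalence.

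Finally I would compare geometric structures, not just underlying stacks. By Theorem \ref{thm:filling_legendrian}, $X_{W_i}\to X_Y$ are Legendrian, so their derived intersection is what ``the derived intersection of the Legendrian boundary conditions'' refers to. Compatibility with the Legendrian/Lagrangian data on the symplectified level is the standard statement that the composite of the Lagrangians $\widetilde{\rho}_1$ and $\widetilde{\rho}_2$ is the transgressed structure on $\Map(W,\widetilde{X})$. Here I would cite the symplectic gluing of Calaque--Haugseng--Scheimbauer \cite{CHS2025}, noting that the glued boundary structure on $W$ is the concatenation of the paths $\gamma_1,\gamma_2$. These data are $\Gm$-equivariant of weight $1$, so they descend via \cite[Theorem 6.2]{IzbudakBerktav2026} as in the proof of Theorem \ref{thm:filling_legendrian}.

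I expect the main obstacle to be the commutation of the $\Gm$-quotient with the homotopy fiber product. One must make sure the fiber product of the quotients is taken relative to $B\Gm$, so that the diagonal $\Gm$ rather than $\Gm\times\Gm$ appears. Over $\mathrm{pt}$ instead, there is an extra $\Gm$-torsor discrepancy: the absolute fiber product $[A/\Gm]\times_{[C/\Gm]}[B/\Gm]$ differs from $[(A\times_C B)/\Gm]$ exactly when the map $[C/\Gm]\to B\Gm$ is not used to identify the two copies of the group. I would resolve this by noting that $X_Y\to B\Gm$ classifies the twisting line bundle $\cO(1)$, and that the restriction maps pull $\cO(1)$ back to $\cO(1)$. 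The $\Gm$-torsors over $X_{W_1}$ and $X_{W_2}$ are therefore canonically identified over $X_Y$, so the absolute and relative fiber products coincide.
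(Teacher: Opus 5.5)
Your proposal is correct and follows the same route as the paper. In both, $\Map(-,\widetilde{X})$ turns the homotopy pushout into a homotopy pullback, the identification of $\Gm$-stacks with stacks over $B\Gm$ lets the quotient commute with that fiber product, and Theorem \ref{thm:filling_legendrian} is then used only to name the result a Legendrian intersection.

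Your closing ``obstacle'' is not a real one: the forgetful functor from stacks over $B\Gm$ to $\mathbf{dSt}_\K$ preserves fiber products, as you already noted in the previous paragraph. Your comparison of glued structures via \cite{CHS2025} goes beyond what the statement or the paper's proof asserts. If you keep it, the glued orientation on $W$ must be built from $\gamma_1$ followed by the reverse of $\gamma_2$, i.e.\ one filling taken with reversed orientation, not a plain concatenation.
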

\begin{proof}
The derived mapping stack functor maps colimits in the source category to limits in the target category. The mapping stack into the symplectification $\Map(W, \widetilde{X})$ evaluates to the homotopy pullback $\Map(W_1, \widetilde{X}) \times^h_{\Map(Y, \widetilde{X})} \Map(W_2, \widetilde{X})$. 

Because the quotient functor to stacks over $B\Gm$ is an equivalence of $\infty$-categories, taking the $\Gm$-equivariant stack quotient commutes with the homotopy pullback. Evaluating the stack quotient yields the derived fiber product of the quotient mapping spaces over $X_Y$. By Theorem \ref{thm:filling_legendrian}, the morphisms into $X_Y$ are Legendrian, identifying $X_W$ as the derived Legendrian intersection.
\end{proof}

\begin{remark}
This gluing theorem formalizes the mechanism of \textit{derived Legendrian surgery}. In the $(\infty, 2)$-category of Legendrian spans \cite[Definition 4.1]{IzbudakBerktav2}, the composition of two 1-morphisms is defined via the derived fiber product over the shared boundary condition. The theorem establishes that the topological gluing of cobordisms on the base space $W_1 \coprod_Y W_2$ maps functorially to the geometric composition of the corresponding Legendrian spans $X_{W_1} \times_{X_Y}^h X_{W_2}$ in the contact moduli space.
\end{remark}

\section{Transgression and the Derived Classical Master Equation}\label{sec:cme}

Beyond the existence of the geometric space, the mapping stack must recover the homological data governing the physical field theory. In the graded AKSZ-contact formalism \cite[Section 5]{CMM}, the space of fields inherits a generalized contact Hamiltonian satisfying the Classical Master Equation (CME) via the Jacobi bracket $\{S, S\}_J = 0$. This section will build the analogous result in our framework. Throughout this section, we assume $X$ is an $n$-shifted contact derived Artin stack and $Y$ is an $\cO$-compact, $d$-oriented derived stack. We will denote by $\widetilde{M}$ the derived mapping stack $\Map(Y, \widetilde{X})$ for ease of writing.

\begin{theorem}[Transgression of the Contact Form]\label{thm: transgression of contact}
Let $E_{\widetilde{X}} \colon \cO_{\widetilde{X}} \to \TT_{\widetilde{X}}$ be the fundamental vector field of the $\Gm$-action, which determines the canonical $n$-shifted 1-form $\alpha_{\widetilde{X}} = \iota_{E_{\widetilde{X}}} \omega_{\widetilde{X}}$ on the symplectification. The integration functional transgresses $\alpha_{\widetilde{X}}$ into a weight 1, $(n-d)$-shifted 1-form $\alpha_{\widetilde{M}}$ on $\widetilde{M}$ that descends to define the canonical shifted contact 1-form on the quotient mapping stack $X_Y$.
\end{theorem}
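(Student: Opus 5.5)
The plan is to define $\alpha_{\widetilde{M}}$ by the same integration map $\int_{[Y]}$ used for $\omega_{\widetilde{M}}$ in Theorem \ref{thm:quotient_contact}, applied to 1-forms instead of 2-forms. We then show that it agrees with the contraction $\iota_{E_{\widetilde{M}}}\omega_{\widetilde{M}}$, where $E_{\widetilde{M}}$ is the fundamental vector field of the constant $\Gm$-action on $\widetilde{M}$. Once this holds, the descent statement is exactly the input of Theorem \ref{thm:A}: the canonical contact 1-form on $[\widetilde{M}/\Gm]$ is by construction the descent of $\iota_E\omega$ along the principal bundle $\widetilde{M}\to X_Y$.

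First, we set $\alpha_{\widetilde{M}} := \int_{[Y]} ev^*\alpha_{\widetilde{X}}$. Concretely, this is the composite
$$\TT_{\widetilde{M}}\simeq \mathbb{R}p_{2*}ev^*\TT_{\widetilde{X}} \xrightarrow{\mathbb{R}p_{2*}ev^*\alpha_{\widetilde{X}}} \mathbb{R}p_{2*}\cO[n] \simeq \cO_{\widetilde{M}}\otimes C(Y,\cO_Y)[n] \xrightarrow{\mathrm{id}\otimes[Y]} \cO_{\widetilde{M}}[n-d].$$
We read this either through the projection $NC^w\to DR$ in weight $1$ or directly via \cite[Prop.~1.14]{PTVV}. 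It is then an $(n-d)$-shifted 1-form.

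Second, we check the weight. The evaluation map $ev\colon Y\times\widetilde{M}\to\widetilde{X}$ is $\Gm$-equivariant, with $\Gm$ acting trivially on $Y$ and by constant maps on $\widetilde{M}$. Moreover, $\alpha_{\widetilde{X}}$ has weight $1$, because $\omega_{\widetilde{X}}$ has weight $1$ and $E_{\widetilde{X}}$ is $\Gm$-invariant. Hence $ev^*\alpha_{\widetilde{X}}$ has weight $1$. The Künneth map $\kappa$ and the map $\mathrm{id}\otimes[Y]$ are $\Gm$-equivariant, since they only involve the $Y$ factor, on which the action is trivial. So $\alpha_{\widetilde{M}}$ has weight $1$, by the same argument as in Theorem \ref{thm:quotient_contact}.

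Third, and this is the main step, we prove $\alpha_{\widetilde{M}}\simeq \iota_{E_{\widetilde{M}}}\omega_{\widetilde{M}}$. Under $\TT_{\widetilde{M}}\simeq \mathbb{R}p_{2*}ev^*\TT_{\widetilde{X}}$, equivariance of $ev$ identifies $E_{\widetilde{M}}$ with the image of $ev^*E_{\widetilde{X}}$; that is, $ev_*(0,E_{\widetilde{M}})=E_{\widetilde{X}}\circ ev$. It then suffices to show that contraction commutes with fiber integration:
$$\iota_{E_{\widetilde{M}}}\int_{[Y]}ev^*\omega = \int_{[Y]}\iota_{(0,E_{\widetilde{M}})}ev^*\omega = \int_{[Y]}ev^*\iota_{E_{\widetilde{X}}}\omega.$$
The first equality holds because $(0,E_{\widetilde{M}})$ is vertical for $Y$, so contraction is $C(Y,\cO_Y)$-linear and commutes with $\mathrm{id}\otimes[Y]$. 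The second is naturality of contraction under pullback along equivariant maps. The obstacle is making this rigorous at the level of graded mixed complexes rather than only on underlying forms. Here I would first use the weight grading: on weight-$1$ closed forms the Euler derivation acts by $1$. The Cartan formula $\mathcal{L}_E=d\iota_E+\iota_E d$, combined with closedness of $\omega$, then gives $\alpha=\iota_E\omega$ with $d\alpha=\omega$ up to homotopy on both sides. Since $\int_{[Y]}$ commutes with $d$ and with the weight decomposition, the identity on $\widetilde{M}$ follows from the one on $\widetilde{X}$, and we only need to compare the underlying forms.

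Finally, we apply \cite[Theorem 5.1]{IzbudakBerktav2026}, i.e.\ Theorem \ref{thm:A}, to $(\widetilde{M},\omega_{\widetilde{M}})$. The contact form it produces on $X_Y$ is the descent of the weight-$1$ form $\iota_{E_{\widetilde{M}}}\omega_{\widetilde{M}}$, which now takes values in $\cO(1)[n-d]$, the twisting line bundle associated with $\widetilde{M}\to X_Y$. By the previous step this form is $\alpha_{\widetilde{M}}$, so the descended form is the canonical contact 1-form on $X_Y$.
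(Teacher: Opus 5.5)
Your proposal is correct and follows essentially the same route as the paper. Both arguments identify $E_{\widetilde{M}}$ with $ev^*E_{\widetilde{X}}$, commute the contraction with $\int_{[Y]}$ to get $\iota_{E_{\widetilde{M}}}\omega_{\widetilde{M}}\simeq\int_{[Y]}ev^*\alpha_{\widetilde{X}}$, use the trivial $\Gm$-action on $Y$ to keep weight $1$, and descend along $\widetilde{M}\to X_Y$ to an $\cO(1)[n-d]$-twisted form; the only difference is that you justify the descent by applying the construction of Theorem~\ref{thm:A} to $(\widetilde{M},\omega_{\widetilde{M}})$, whereas the paper argues via the local description $\alpha_{\widetilde{X}}\simeq f\cdot p^*\alpha_X$.
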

\begin{proof}
Let $E_{\widetilde{M}} \colon \cO_{\widetilde{M}} \to \TT_{\widetilde{M}}$ denote the fundamental vector field of the induced pointwise $\Gm$-action on the mapping stack. Because the action operates on the target fibers of the evaluation map, $E_{\widetilde{M}}$ corresponds to the pullback of $E_{\widetilde{X}}$. 

We compute the canonical 1-form $\alpha_{\widetilde{M}}$ by taking the interior product of $E_{\widetilde{M}}$ with the transgressed symplectic form. Since the integration functional operates on the domain factors, it commutes with the contraction operator on the target space
$$
    \alpha_{\widetilde{M}} = \iota_{E_{\widetilde{M}}} \left( \int_{[Y]} ev^* \omega_{\widetilde{X}} \right) \simeq \int_{[Y]} ev^* \left( \iota_{E_{\widetilde{X}}} \omega_{\widetilde{X}} \right) = \int_{[Y]} ev^* \alpha_{\widetilde{X}}
$$
Because $Y$ carries the trivial $\Gm$-action, the integration functional preserves the $\Gm$-weights. Since $\alpha_{\widetilde{X}}$ is a weight 1 form, $\alpha_{\widetilde{M}}$ is a weight 1 form of degree $n-d$. 

By the local theory of shifted contact structures, the canonical 1-form on the symplectification admits the local description $\alpha_{\widetilde{X}} \simeq f \cdot p^*\alpha_X$, where $f \in \cO_{\widetilde{X}}^\times$ represents the scaling action along the fibers of the principal $\Gm$-bundle $p \colon \widetilde{X} \to X$. 

Because $\alpha_{\widetilde{M}}$ is a weight 1 eigenform and carries this identical scaling property along the target fibers of the mapping stack, it descends along the principal $\Gm$-bundle $\pi \colon \widetilde{M} \to X_Y$. The presence of the scaling factor $f$ geometrically dictates that the descended form does not map to functions, but rather to sections of the line bundle associated to the torsor. Thus, it descends to a global section twisted by the canonical line bundle $\cO(1)$ on the quotient
$$
    \alpha_{X_Y} \in \pi_0 \mathbb{R}\Gamma(X_Y, \LL_{X_Y} \otimes^{\mathbb{L}}_{\cO_{X_Y}} \cO(1)[n-d])
$$
This establishes the derived algebraic analogue of the weak contact form transgression $\breve{\alpha}$ utilized in the graded setting \cite[Section 5.1]{CMM}.
\end{proof}

With the canonical shifted 1-form established on the quotient mapping stack, completing the geometric realization of the Classical Master Equation requires verifying that this form induces a non-degenerate distribution.

\begin{proposition}[The Derived Classical Master Equation]\label{prop:derived_cme}
The quotient mapping stack $X_Y$ satisfies the geometric requirements of the derived Classical Master Equation. The underlying contact distribution is non-degenerate, yielding the equivalence of perfect complexes
$$
    \cK_{X_Y} \xrightarrow{\sim} \cK_{X_Y}^\vee \otimes^{\mathbb{L}}_{\cO_{X_Y}} \cO(1)[n-d]
$$
induced by the derived de Rham differential $d_{dR}\alpha_{X_Y}$.
\end{proposition}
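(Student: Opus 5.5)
The plan is to deduce non-degeneracy on $X_Y$ from non-degeneracy upstairs on $\widetilde{M} = \Map(Y,\widetilde{X})$, using the descent machinery already used for Theorem \ref{thm:quotient_contact}. No new pairing argument is carried out on the quotient. There are three inputs. First, by \cite[Theorem 2.5]{PTVV} as used in the proof of Theorem \ref{thm:quotient_contact}, $\omega_{\widetilde{M}} = \int_{[Y]} ev^*\omega_{\widetilde{X}}$ induces an equivalence $\Theta_{\omega_{\widetilde{M}}}\colon \TT_{\widetilde{M}} \xrightarrow{\sim} \LL_{\widetilde{M}}[n-d]$. Second, by Theorem \ref{thm: transgression of contact}, $\alpha_{\widetilde{M}} = \iota_{E_{\widetilde{M}}}\omega_{\widetilde{M}}$ is a weight $1$ form descending to $\alpha_{X_Y}$. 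Third, weight $1$ and the Cartan formula give $\omega_{\widetilde{M}} \simeq d_{dR}\alpha_{\widetilde{M}}$ as weight $1$ closed forms. This identifies the form controlling the CME with $d_{dR}$ of the transgressed $1$-form.

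The key step is to compare the contact distribution on $X_Y$ with the symplectic data on $\widetilde{M}$ along the principal $\Gm$-bundle $\pi\colon \widetilde{M} \to X_Y$. We use the relative tangent sequence $\cO_{\widetilde{M}} \xrightarrow{E_{\widetilde{M}}} \TT_{\widetilde{M}} \to \pi^*\TT_{X_Y}$, and pull back $\cK_{X_Y} = \mathrm{fib}(\TT_{X_Y} \xrightarrow{\alpha^\vee_{X_Y}} \cO(1)[n-d])$. Since $\pi^*\cO(1)\simeq \cO_{\widetilde{M}}$, the complex $\pi^*\cK_{X_Y}$ is identified with the ``symplectic reduction'' of $\TT_{\widetilde{M}}$. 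Concretely, $\pi^*\cK_{X_Y}$ is the cohomology of the monad
\[
\cO_{\widetilde{M}} \xrightarrow{E_{\widetilde{M}}} \TT_{\widetilde{M}} \xrightarrow{\iota(-)\alpha_{\widetilde{M}}} \cO_{\widetilde{M}}[n-d],
\]
where we use $\alpha_{\widetilde{M}}(E_{\widetilde{M}}) = \omega_{\widetilde{M}}(E,E)=0$. The equivalence $\Theta_{\omega_{\widetilde{M}}}$ sends $E_{\widetilde{M}}$ to $\alpha_{\widetilde{M}}$, and therefore exchanges the two ends of the monad with their duals. It thus induces an equivalence $\pi^*\cK_{X_Y} \xrightarrow{\sim} (\pi^*\cK_{X_Y})^\vee[n-d]$, and this map is the pullback of $\Phi = d_{dR}\alpha_{X_Y}|_{\cK}$ twisted by $\cO(1)$. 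This is the derived analogue of the classical fact that $\omega|_{\ker\alpha}$ on the symplectization computes $d\alpha|_\xi$.

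Finally, to descend the equivalence we observe that the map is $\Gm$-equivariant of weight $1$, matching the $\cO(1)$-twist. Since $\pi$ is faithfully flat (smooth surjective), being an equivalence of perfect complexes can be checked after pullback along $\pi$. This yields the stated equivalence $\cK_{X_Y} \xrightarrow{\sim} \cK_{X_Y}^\vee \otimes \cO(1)[n-d]$ on $X_Y$. Equivalently, one may invoke the non-degeneracy part of the derived transversality theorem \cite[Theorem 5.1]{IzbudakBerktav2026} directly for the weight $1$ symplectic stack $(\widetilde{M},\omega_{\widetilde{M}})$.

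I expect the main obstacle to be the homotopy-coherent identification of the monad cohomology with $\pi^*\cK_{X_Y}$ together with its dual. This means checking that $\Theta_{\omega_{\widetilde{M}}}$ restricts compatibly to the fiber and cofiber in a way that holds coherently, not merely on homotopy groups. The first approach would be to work $\Gm$-equivariantly over $B\Gm$ with graded mixed complexes as in \cite{Calaque2019}, so that weight bookkeeping makes the twist by $\cO(1)$ automatic.
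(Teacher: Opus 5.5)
Your proposal is correct and follows essentially the same route as the paper's proof: pull back along $\pi\colon\widetilde{M}\to X_Y$, identify $\pi^*\cK_{X_Y}$ with the derived symplectic reduction, get its self-duality from the PTVV non-degeneracy of $\omega_{\widetilde{M}}$, and descend using weight-$1$ equivariance. The paper phrases the reduction as the cofiber of the isotropic lift $\cO_{\widetilde{M}}\to E_{\widetilde{M}}^\perp$, which is your monad cohomology; your two additions---the Cartan identity $\omega_{\widetilde{M}}\simeq d_{dR}\alpha_{\widetilde{M}}$ tying the pairing to $d_{dR}\alpha_{X_Y}$, and checking the descended map is an equivalence after the smooth surjective pullback $\pi^*$---make explicit points the paper leaves implicit.
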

\begin{proof}
The distribution $\cK_{X_Y}$ is defined as the homotopy fiber $\mathrm{fib}(\TT_{X_Y} \xrightarrow{\alpha_{X_Y}^\vee} \cO(1)[n-d])$. We verify the non-degeneracy by pulling the data back along the principal $\Gm$-bundle $\pi \colon \widetilde{M} \to X_Y$. 

On the symplectification, the derived orthogonal complement $E_{\widetilde{M}}^\perp$ is defined as the homotopy fiber of the contraction morphism $\TT_{\widetilde{M}} \xrightarrow{\iota_E \omega} \cO_{\widetilde{M}}[n-d]$. Because the fundamental vector field is isotropic ($\iota_E \iota_E \omega_{\widetilde{M}} \simeq 0$), there is a canonical lift defining a morphism $E_{\widetilde{M}} \colon \cO_{\widetilde{M}} \to E_{\widetilde{M}}^\perp$.

By derived base change along the torsor projection, the pulled-back tangent complex $\mathbb{L}\pi^*\TT_{X_Y}$ is the homotopy cofiber of $E_{\widetilde{M}} \colon \cO_{\widetilde{M}} \to \TT_{\widetilde{M}}$. Because $\alpha_{\widetilde{M}}$ descends to the base, the contraction map factors through this cofiber. By the pasting law for exact triangles, this identifies the pulled-back contact distribution $\mathbb{L}\pi^*\cK_{X_Y}$ as the homotopy cofiber of the isotropic inclusion $\cO_{\widetilde{M}} \to E_{\widetilde{M}}^\perp$.

This realizes $\mathbb{L}\pi^*\cK_{X_Y}$ as the derived symplectic reduction of $\widetilde{M}$ along the $\Gm$-action. By the PTVV mapping stack theorem \cite[Theorem 2.5]{PTVV}, the transgressed form $\omega_{\widetilde{M}}$ is a strict $(n-d)$-shifted symplectic structure. In the stable $\infty$-category of perfect complexes, the quotient of an orthogonal complement by its isotropic generator inherits a non-degenerate pairing. Thus, $\omega_{\widetilde{M}}$ induces the equivalence
$$
    \mathbb{L}\pi^*\cK_{X_Y} \xrightarrow{\sim} (\mathbb{L}\pi^*\cK_{X_Y})^\vee [n-d]
$$
in $L_{qcoh}(\widetilde{M})$. This equivalence is $\Gm$-equivariant and carries weight 1. Applying derived $\Gm$-equivariant descent along the torsor projection $\pi$ transfers this equivalence to the base space, twisting the target by the canonical line bundle $\cO(1)$. This establishes the non-degenerate equivalence $\cK_{X_Y} \xrightarrow{\sim} \cK_{X_Y}^\vee \otimes^{\mathbb{L}}_{\cO_{X_Y}} \cO(1)[n-d]$ in $L_{qcoh}(X_Y)$. The integrability of this geometry is guaranteed by the closure of the forms in the derived de Rham prestack, confirming that the moduli space satisfies the geometric CME.

\end{proof}

\section{Applications: Derived Contact Sigma Models}\label{sec:apps}

Having verified that the quotient mapping stack satisfies the geometric Classical Master Equation, the resulting moduli spaces possess the necessary homological data to encode the shifted BV formulation of topological field theories. We apply this setup to define the derived analogues of specific models and geometric integration constructions by selecting particular values for the geometric shift $n$ of the contact derived Artin stack $X$ and the orientation dimension $d$ of the $\cO$-compact derived stack $Y$. We again mean by $\widetilde{M}$ the derived mapping stack $\Map(Y, \widetilde{X})$.

\subsection{The Derived Jacobi Sigma Model (\texorpdfstring{$n=1, d=2$})}

Setting the geometric shift to $n=1$ recovers the target space geometry of Jacobi manifolds. A classical Jacobi manifold consists of a line bundle $\cL$ over a manifold $M$ equipped with a Lie bracket on its sections satisfying a generalized Leibniz rule. This structure is equivalent to a homogeneous Poisson structure on the principal $\Gm$-bundle associated to $\cL$.
 
In the derived setting, a derived Jacobi structure on a derived stack $X$ with line bundle $\cL$ is defined as a 1-shifted Poisson structure on the principal $\Gm$-bundle $p \colon \widetilde{X} \to X$ that is homogeneous of weight $-1$ with respect to the $\Gm$-action. Assuming this 1-shifted Poisson structure is non-degenerate, \cite[Theorem 4.1]{Tomic} shows that it induces a quasi-isomorphism between the tangent and shifted cotangent complexes, defining a 1-shifted symplectic form $\omega_{\widetilde{X}}$. The weight $-1$ condition on the Poisson bivector dualizes to a weight $1$ condition on the corresponding symplectic form $\omega_{\widetilde{X}}$. Thus, the derived symplectification $\widetilde{X}$ is a 1-shifted symplectic stack equipped with a weight 1 $\Gm$-action. By derived equivariant descent, the stack quotient $X \simeq [\widetilde{X}/\Gm]$ admits a 1-shifted contact structure, leading to the following definition:
\begin{definition}
    A derived \bfem{Jacobi manifold} is formalized in derived algebraic geometry as a $1$-shifted contact derived Artin stack $X$.
\end{definition}

Let $Y = \Sigma_B$ be the Betti stack of a compact, oriented 2-dimensional surface. The Betti stack is $\cO$-compact and 2-oriented ($d=2$). Applying the derived AKSZ-contact construction, the quotient mapping stack
$$
    X_{\Sigma_B} = [\Map(\Sigma_B, \widetilde{X})/\Gm]
$$
inherits a $(1-2) = -1$-shifted contact structure. 

The $(-1)$-shifted contact structure on $X_{\Sigma_B}$ provides the geometric space of fields for the derived \bfem{Jacobi Sigma Model}. In classical literature, the Jacobi sigma model action relies on restricting fields to the zero section of the symplectification. Our quotient mapping stack $X_{\Sigma_B}$ yields this geometry, mapping to the translation-invariant curves of Symplectic Field Theory.

\subsection{The Derived Courant-Jacobi Sigma Model (\texorpdfstring{$n=2, d=3$})}

 Increasing the structural shift to $n=2$ recovers the geometry of Courant-Jacobi algebroids. In graded geometry, a classical Courant algebroid corresponds to a symplectic $NQ$-manifold of degree 2. In derived algebraic geometry, this structure translates to a 2-shifted symplectic derived stack. A Courant-Jacobi algebroid introduces a modification by twisting the pairing with a line bundle $\cL$ over the base space. By \cite[Theorem 4.2]{CMM}, the graded space of fields for a Courant-Jacobi algebroid constitutes a contact $NQ$-manifold of degree 2. The symplectification of this contact $NQ$-manifold is a degree 2 symplectic $NQ$-manifold equipped with a homogeneous weight 1 
 $\mathbb{R}$-action. 
 
 Translating the correspondence above to the derived setting, the principal $\Gm$-bundle $\widetilde{X}$ over the base space $X$ is a 2-shifted symplectic derived stack equipped with a weight 1 $\Gm$-action. The $\Gm$-equivariant descent along the torsor projection down to the base stack $X \simeq [\widetilde{X}/\Gm]$ defines a 2-shifted contact structure, and hence we get:
 \begin{definition}
     A \bfem{Courant-Jacobi algebroid} is geometrized in derived algebraic geometry as a $2$-shifted contact derived stack $X$.
 \end{definition}

Let $Y = M_B$ be the Betti stack of a compact, oriented 3-dimensional manifold ($d=3$), hence $\cO$-compact and $3$-oriented. The quotient mapping stack 
$$
    X_{M_B} = [\Map(M_B, \widetilde{X})/\Gm]
$$
admits a $(2-3) = -1$-shifted contact structure. This derived space provides the moduli formulation of the 3-dimensional \bfem{Courant-Jacobi topological field theory}. The $(-1)$-shifted geometry controls the shifted BV action functional, incorporating Wess-Zumino-Witten terms through the non-triviality of the twisting line bundle $\cO(1)$ on the mapping stack.

\subsection{Derived Loop Spaces and Contact Integration (\texorpdfstring{$n=1, d=1$})}

While evaluating the mapping stack on higher-dimensional source spaces generates field theories, restricting to 1-dimensional source spaces recovers geometric integration procedures. A major application of the AKSZ formalism in Poisson geometry is the integration of Poisson manifolds to symplectic groupoids via the derived loop space. We establish the contact analogue for integrating Jacobi structures.

Let $X$ be a $1$-shifted contact derived Artin stack, representing a derived Jacobi manifold. Let $Y = S^1_B$ be the Betti stack of the circle, which is $\cO$-compact and 1-oriented ($d=1$). The derived quotient mapping stack $X_{S^1_B} = [\Map(S^1_B, \widetilde{X})/\Gm]$ admits a $(1-1) = 0$-shifted contact structure. 

This derived space provides the moduli-theoretic construction for the path space reduction of Jacobi manifolds. It evaluates the derived space of paths as a $0$-shifted contact stack, realizing the higher geometric integration of derived Jacobi structures into derived contact groupoids.

\subsection{The Contact PTVV Theorem for Perfect Complexes}

As a further application of our quotient mapping stack formalism, we establish the contact analogue of the PTVV theorem for moduli spaces of perfect complexes \cite[Theorem 0.1]{PTVV}. In the classical setting, the derived moduli stack of perfect complexes $\mathrm{Perf}$ acts as a $2$-shifted symplectic target. To transition to the contact setting, we replace this with its $2$-shifted contact prequantization: the \bfem{projectivized $2$-shifted cotangent stack} $\mathbb{P}(T^*[2]\mathrm{Perf}) := [T^*[2]\mathrm{Perf}^\circ / \Gm]$, which is analogous to the construction of $\mathbb{P}(T^*[2]BG)$ in \cite[Corollary 6.4]{IzbudakBerktav2026}.

By evaluating this target over various oriented source spaces, and carefully analyzing the necessary conditions for strong non-degeneracy, we obtain the following complete characterization.

\begin{proposition}[Contact PTVV and Calabi-Yau Spaces] \label{prop: contact Calabi-Yau}
Let $Y$ be an $\cO$-compact derived stack. The quotient mapping stack 
$$[\Map(Y, T^*[2]\mathrm{Perf}^\circ)/\Gm]$$ 
canonically admits a strict $(2-d)$-shifted contact structure via transgression if and only if $Y$ is a Calabi-Yau derived stack (or Calabi-Yau category) of dimension $d$. 

Specifically, we have the following geometric evaluations:
\begin{enumerate}\itemsep=0.1in
    \item \bfem{Calabi-Yau Case:} If $Y$ is a smooth and proper Calabi-Yau variety of dimension $d$, the quotient mapping stack evaluates to the derived moduli stack of \bfem{projective Higgs perfect complexes} on $Y$ $$\mathbb{P}(T^*[2-d]\mathrm{Perf}(Y)),$$ which admits a canonical $(2-d)$-shifted contact structure.
    
    \item \bfem{De Rham Case:} If $Y = Z_{DR}$ is the de Rham stack of a smooth and proper variety $Z$ of dimension $d$, the quotient mapping stack evaluates to the \bfem{projectivized cotangent stack of perfect complexes with flat connections} on $Z$, denoted $$\mathbb{P}(T^*[2(1-d)]\mathrm{Perf}_{DR}(Z)),$$ which admits a canonical $2(1-d)$-shifted contact structure.
    
    \item \bfem{Betti Case:} If $Y = M_B$ is the Betti stack associated with a compact oriented topological manifold $M$ of dimension $d$, the quotient mapping stack evaluates to the \bfem{projectivized cotangent stack of perfect complexes of local systems} on $M$, denoted $$\mathbb{P}(T^*[2-d]\mathrm{Loc}(M, \mathrm{Perf})),$$ which admits a canonical $(2-d)$-shifted contact structure.
\end{enumerate}
\end{proposition}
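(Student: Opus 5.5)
The plan is to reduce the proposition to Theorem \ref{thm:quotient_contact} and the Transgression Theorem \ref{thm: transgression of contact}, applied with target $\widetilde{X} = T^*[2]\mathrm{Perf}^\circ$. First I would check that this target is admissible. The zero section is removed, so the fiberwise scaling $\Gm$-action on $T^*[2]\mathrm{Perf}$ is free on $T^*[2]\mathrm{Perf}^\circ$. The canonical $2$-shifted symplectic form $\omega = d_{dR}\lambda$, where $\lambda$ is the tautological Liouville $1$-form, is linear in the fiber coordinates and hence has weight $1$. By Theorem \ref{thm:A}, $\mathbb{P}(T^*[2]\mathrm{Perf}) = [T^*[2]\mathrm{Perf}^\circ/\Gm]$ is then a $2$-shifted contact derived Artin stack, with symplectification $T^*[2]\mathrm{Perf}^\circ$, exactly as for $\mathbb{P}(T^*[2]BG)$.

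For the ``if'' direction, take $Y$ Calabi--Yau of dimension $d$. Its Calabi--Yau structure supplies an $\cO$-orientation $[Y]\colon C(Y,\cO_Y)\to \K[-d]$ whose induced pairings are non-degenerate, which is the Serre-duality input used in \cite[Theorem 2.5]{PTVV}. Theorem \ref{thm:quotient_contact} then gives a $(2-d)$-shifted contact structure on the quotient. Theorem \ref{thm: transgression of contact} identifies its contact form as the descent of $\int_{[Y]} ev^*\lambda$, so the structure is canonical and obtained by transgression. Proposition \ref{prop:derived_cme} gives strict non-degeneracy of $\cK \to \cK^\vee\otimes\cO(1)[2-d]$.

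For the ``only if'' direction, suppose the transgressed structure is strict. Pull the distribution back along $\pi\colon \widetilde{M}\to X_Y$ as in the proof of Proposition \ref{prop:derived_cme}. Non-degeneracy of the reduced pairing on $E^\perp/\cO$ then forces $\int_{[Y]}ev^*\omega$ to be non-degenerate on all of $\TT_{\widetilde{M}}$. At a point $(E,\phi)$ the tangent complex is built from $\mathrm{End}(E)[1]$ on $Y$, so non-degeneracy says that $\mathrm{RHom}(E,F)\otimes \mathrm{RHom}(F,E)\to C(Y,\cO_Y)\to\K[-d]$ is perfect for all perfect $E,F$, since the fibers range over all perfect complexes. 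That is exactly a (left) Calabi--Yau structure of dimension $d$ on $\mathrm{Perf}(Y)$. I expect this converse to be the main obstacle. The points needing care are: passing from non-degeneracy on the quotient back to non-degeneracy on all of $\TT_{\widetilde{M}}$, where the isotropic line $\cO\cdot E_{\widetilde{M}}$ has to be handled (I would argue via the two-out-of-three property on the fiber sequences $\cO\to E^\perp\to\pi^*\cK$); and restricting to the open locus where $\phi\neq 0$, which I would handle by noting that the pairing on the $\mathrm{Perf}$-directions does not depend on $\phi$.

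For the three evaluations I would use $\Map(Y, T^*[2]\mathrm{Perf})\simeq T^*[2-d]\Map(Y,\mathrm{Perf})$. This holds because shifted cotangent bundles transgress via Serre/Poincar\'e duality along $[Y]$, compatibly with the scaling action. Removing the zero section and dividing by $\Gm$ then gives $\mathbb{P}(T^*[2-d]\,\Map(Y,\mathrm{Perf}))$.
\begin{enumerate}
\item If $Y$ is smooth and proper Calabi--Yau, $\Map(Y,\mathrm{Perf})=\mathrm{Perf}(Y)$, giving projective Higgs complexes.
\item If $Y = Z_{DR}$, $\Map(Z_{DR},\mathrm{Perf})=\mathrm{Perf}_{DR}(Z)$, and the de Rham orientation has degree $2d$, which yields the shift $2-2d = 2(1-d)$.
\item If $Y = M_B$, $\Map(M_B,\mathrm{Perf}) = \mathrm{Loc}(M,\mathrm{Perf})$, with orientation coming from Poincar\'e duality in degree $d$.
\end{enumerate}
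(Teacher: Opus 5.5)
Your proposal follows essentially the same route as the paper: reduce strict contactness of $[\Map(Y,T^*[2]\mathrm{Perf}^\circ)/\Gm]$ to strict non-degeneracy of the weight-$1$ transgressed form on the symplectification, evaluate the transgressed pairing pointwise to reduce it to a Serre-duality (Calabi--Yau) condition on $\mathbb{R}\Hom$'s, and identify the mapping stack with $T^*[2-d]\Map(Y,\mathrm{Perf})$, using orientation degree $2d$ in the de Rham case and $d$ in the Betti case, before projectivizing. Two of your steps tighten points the paper only asserts: the two-out-of-three argument for the converse, and evaluating at points with $\phi\neq 0$ (the paper evaluates at the zero section, which has been removed); note, however, that the trace-pairing condition you derive is a right (proper) Calabi--Yau structure, not a left one.
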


\begin{proof}
Let $\widetilde{\mathcal{F}} = T^*[2]\mathrm{Perf}^\circ$. By Theorem \ref{thm:A} and Theorem \ref{thm:quotient_contact}, the quotient stack $$[\Map(Y, \widetilde{\mathcal{F}})/\Gm]$$ admits a strict $(2-d)$-shifted contact structure if and only if its derived symplectification, given by the unmodified mapping space $\Map(Y, \widetilde{\mathcal{F}})$, carries a weight $1$, strictly non-degenerate $(2-d)$-shifted symplectic structure.

The AKSZ transgression mechanism dictates that pushing forward the $2$-shifted symplectic form $\omega_{\widetilde{\mathcal{F}}}$ along the evaluation map $ev \colon Y \times \Map(Y, \widetilde{\mathcal{F}}) \to \widetilde{\mathcal{F}}$ requires a continuous trace functional $\int_{[Y]} \colon \mathbb{R}\Gamma(Y, \cO_Y) \to \K[-d]$. The existence of this functional implies that $Y$ must be an $\cO$-compact derived stack equipped with an $\cO$-orientation of dimension $d$.

Let $f \colon Y \to \widetilde{\mathcal{F}}$ be a given map. The tangent complex of the derived mapping stack at $f$ evaluates to $\TT_f \simeq \mathbb{R}\Gamma(Y, f^*\TT_{\widetilde{\mathcal{F}}})$. Because $\widetilde{\mathcal{F}}$ is strictly $2$-shifted symplectic, the intrinsic equivalence $\TT_{\widetilde{\mathcal{F}}} \xrightarrow{\sim} \LL_{\widetilde{\mathcal{F}}}[2]$ induces a pairing on the tangent complex of the mapping space via the composition
\[ \mathbb{R}\Gamma(Y, f^*\TT_{\widetilde{\mathcal{F}}}) \otimes^\LL \mathbb{R}\Gamma(Y, f^*\TT_{\widetilde{\mathcal{F}}}) \xrightarrow{\sim} \mathbb{R}\Gamma(Y, f^*(\TT_{\widetilde{\mathcal{F}}} \otimes^\LL \TT_{\widetilde{\mathcal{F}}})) \xrightarrow{\omega_{\widetilde{\mathcal{F}}}} \mathbb{R}\Gamma(Y, \cO_Y[2]) \xrightarrow{\int_{[Y]}} \K[2-d]. \]
The transgressed $2$-form is strictly non-degenerate if and only if this pairing is a quasi-isomorphism. For a map $f$ factoring through the zero section of the cotangent bundle, corresponding to a perfect complex $E \in \mathrm{Perf}(Y)$, the pullback $f^*\TT_{\widetilde{\mathcal{F}}}$ splits into a direct sum of the tangent and shifted cotangent complexes of $\mathrm{Perf}$ evaluated at $E$. The non-degeneracy of the canonical hyperbolic pairing on $f^*\TT_{\widetilde{\mathcal{F}}}$ restricts the mapping space non-degeneracy to the condition that the trace map induces a perfect pairing on the derived endomorphism algebra:
\[ \mathbb{R}\Hom_Y(E, E) \otimes^\LL \mathbb{R}\Hom_Y(E, E) \xrightarrow{\text{comp}} \mathbb{R}\Gamma(Y, \cO_Y) \xrightarrow{\int_{[Y]}} \K[-d]. \]

This requires an equivalence $\mathbb{R}\Hom_Y(E, E) \xrightarrow{\sim} \mathbb{R}\Hom_Y(E, E)^\vee[-d]$ for all $E \in \mathrm{Perf}(Y)$, which holds if and only if $Y$ satisfies Serre duality of dimension $d$ with a trivial dualizing sheaf ($\omega_Y \simeq \cO_Y$). In derived algebraic geometry, this is precisely the defining condition for $Y$ to be a Calabi-Yau derived stack of dimension $d$. Under this assumption, Serre duality yields the equivalence $\Map(Y, \widetilde{\mathcal{F}}) \simeq T^*[2-d]\mathrm{Perf}(Y)^\circ$, and the subsequent $\Gm$-equivariant quotient isolates the weight $1$ structure to yield the canonical $(2-d)$-shifted contact structure on the projectivization.

Conversely, if $Y$ is a smooth and proper variety but lacks a trivial dualizing sheaf ($\omega_Y \not\simeq \cO_Y$), the fundamental integration pairing possesses a non-trivial kernel. The ensuing degeneracy in the transgressed form ensures the quotient mapping stack admits only a weak contact structure, as characterized by Theorem \ref{prop:weak_contact}.

Finally, the required non-degeneracy criteria are intrinsically satisfied in the de Rham and Betti contexts via their respective canonical dualities:
\medskip

{\textbf{De Rham Case:}} For the de Rham stack $Y = Z_{DR}$ of a smooth and proper $d$-dimensional variety $Z$, algebraic Poincar\'e duality in de Rham cohomology provides the necessary $2d$-orientation. Transgression yields a strict $2(1-d)$-shifted symplectic structure on the unmodified mapping stack. Passing to the $\Gm$-quotient thereby establishes the canonical $2(1-d)$-shifted contact structure on the projectivized cotangent stack of flat perfect complexes.
\medskip

{\textbf{Betti Case:}} For the Betti stack $Y = M_B$ of a compact oriented $d$-manifold $M$, topological Poincar\'e duality in Betti cohomology supplies a non-degenerate $d$-orientation. This guarantees the perfection of the pairing, yielding a strictly non-degenerate $(2-d)$-shifted symplectic form on the mapping space. The $\Gm$-equivariant quotient subsequently descends this geometry to a strict $(2-d)$-shifted contact structure on the projectivization of local systems. \qedhere
\end{proof}

\begin{remark}[Symplectic Case]
For an $\cO$-compact derived stack $Y$, the standard derived mapping stack $\Map(Y, \mathrm{Perf})$ canonically inherits a strict $(2-d)$-shifted symplectic structure via the AKSZ transgression mechanism \bfem{if and only if} $Y$ is a Calabi-Yau space (or category) of dimension $d$. For spaces that are not Calabi-Yau, the canonical constructions yield degenerate forms. (To study non-Calabi-Yau spaces, derived geometry usually shifts to looking at shifted Poisson structures on the mapping stack, or mapping stacks relative to a boundary or anti-canonical divisor).
\end{remark}

\subsection{Linearized Contact Extended Topological Field Theories}
Beyond the specific low-dimensional examples evaluated above, the quotient mapping stack construction assigns geometric data to topological cobordisms of arbitrary dimension. 

Recall from Section \ref{sec:fillings} that derived fillings induce Legendrian boundary conditions, and that the topological gluing of cobordisms evaluates to derived Legendrian intersections. By evaluating the quotient mapping stack across the symmetric monoidal $(\infty, d)$-category of oriented cobordisms, this generates a geometric Extended Topological Field Theory (ETFT) taking values in the non-linear higher category of Legendrian correspondences \cite{IzbudakBerktav2}:
\[ \mathcal{Z}_c \colon Bord_d \to Leg_{n-d+1}. \]

To define numerical invariants and linearize this topological field theory, one requires a contact analogue of the BBDJS perverse sheaves on the derived critical loci \cite{BBDJS}. This is supplied by two companion papers of the first author. The first establishes the Equivariant Contact Darboux Theorem, verifying that $(-1)$-shifted contact stacks are locally geometric quotients of derived discriminant loci \cite[Theorem 3.8]{Izbudak_Darboux}. Building on that geometry, the second constructs canonical monodromic $\ell$-adic perverse sheaves \cite[Theorem A]{Izbudak_Monodromic} and uses them to linearize the non-linear Legendrian $2$-category \cite[Theorem C]{Izbudak_Monodromic}. This yields a $2$-functor
\[ F \colon Leg_0^{\mathrm{pr}} \longrightarrow LLeg_0 , \]
monoidal for the contact product and taking values in the perversely categorified Legendrian category. Here $Leg_0^{\mathrm{pr}} \subseteq Leg_0$ is the sub-$2$-category on all objects whose $1$- and $2$-morphisms are proper and carry graded orientations, and the construction of $F$ is conditional on the monodromic refinement of the symplectic Joyce conjecture assumed in \cite[Setup 5.1]{Izbudak_Monodromic}.

By restricting our quotient mapping stack to $n = d - 1$, we can post-compose the geometric ETFT with this perverse linearization, provided the Legendrian spans it produces are proper and graded oriented, so that they lie in $Leg_0^{\mathrm{pr}}$.

\begin{theorem}[Cohomological Contact ETFT]\label{thm:linear_etft}
Assume the hypotheses of \cite[Setup 5.1]{Izbudak_Monodromic} and suppose $\mathcal{Z}_c$ takes values in $Leg_0^{\mathrm{pr}}$. Then the composition $\mathcal{Z}^{lin}_c := F \circ \mathcal{Z}_c$ defines a Cohomological Contact Extended Topological Field Theory:
\[ \mathcal{Z}^{lin}_c \colon Bord_d \to LLeg_0. \]
\end{theorem}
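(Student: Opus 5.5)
The plan is to treat $\mathcal{Z}^{lin}_c$ as a composite of two functors of higher categories that are each already available. Once they compose in the right categories, the statement follows formally.

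First, I would make the geometric ETFT $\mathcal{Z}_c \colon Bord_d \to Leg_{n-d+1}$ precise at $n = d-1$, so that the target is $Leg_0$. On objects, a closed oriented $(d-1)$-manifold (or lower-dimensional stratum) $Y$ goes to the Betti stack $Y_B$. The image of $Y_B$ is the quotient mapping stack $X_{Y_B}$, which is a contact stack of the correct shift by Theorem \ref{thm:quotient_contact}. On $1$-morphisms, a cobordism $W$ with boundary inclusion $Y_0 \sqcup Y_1 \to W$ is a derived filling. By Theorem \ref{thm:filling_legendrian} it produces a Legendrian span $X_{Y_0} \leftarrow X_{W_B} \to X_{Y_1}$. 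Here I would apply the filling theorem to $Y_0 \sqcup Y_1$, using $\Map(Y_0\sqcup Y_1,\widetilde X)\simeq \Map(Y_0,\widetilde X)\times\Map(Y_1,\widetilde X)$ and the contact product. Higher morphisms are handled iteratively in the same way, as in the Calabi--Haugseng--Scheimbauer symplectic AKSZ functor. Composition is respected up to coherent equivalence by Theorem \ref{thm:top glu}, which identifies gluing with the derived Legendrian intersection, that is, with composition in $Leg$. Symmetric monoidality comes from $\Map(-,\widetilde X)$ sending disjoint unions to products, followed by the $\Gm$-quotient. The cleanest way to organise all of this is to lift to the symplectic level: take the CHS functor $Bord_d \to Lag$ for the target $\widetilde X$, observe that every datum is $\Gm$-equivariant of weight $1$ (as in the proof of Theorem \ref{thm:filling_legendrian}), and push through the equivariant descent of \cite{IzbudakBerktav2026}, viewed as a functor from weight-$1$ $\Gm$-equivariant Lagrangian correspondences to Legendrian correspondences.

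Second, I would invoke the hypothesis that $\mathcal{Z}_c$ lands in $Leg_0^{\mathrm{pr}}$, meaning the spans are proper and carry graded orientations. I would only truncate $\mathcal{Z}_c$ to its $2$-categorical part, since $F$ is a $2$-functor. Then, under \cite[Setup 5.1]{Izbudak_Monodromic}, the linearization $F \colon Leg_0^{\mathrm{pr}} \to LLeg_0$ of \cite[Theorem C]{Izbudak_Monodromic} is a monoidal $2$-functor. So $F\circ\mathcal{Z}_c$ is a composite of symmetric monoidal functors, and hence a symmetric monoidal functor $Bord_d \to LLeg_0$.

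The step I expect to be the main obstacle is compatibility of the monoidal structures. I need to check that the monoidal structure on $Leg_0^{\mathrm{pr}}$ for which $F$ is monoidal, namely the contact product, matches the one induced by disjoint union of bordisms through the quotient mapping stacks. The issue is that $[(\widetilde M_0\times\widetilde M_1)/\Gm]$ is not literally $X_{Y_0}\times X_{Y_1}$. Instead it is the contact product, the quotient by the diagonal $\Gm$ of the product of the symplectifications. I would first verify this identification on symplectifications, where it is immediate. I would then check that descent via Theorem \ref{thm:A} sends products of weight-$1$ symplectic stacks to the contact product used in \cite{Izbudak_Monodromic}. Once that is settled, the remaining coherence is formal.
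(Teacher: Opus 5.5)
Your proposal is correct and follows the paper's route. The paper's argument is just the formal post-composition of the geometric theory $\mathcal{Z}_c$ at $n=d-1$ with the linearization $F$ of \cite[Theorem C]{Izbudak_Monodromic}, where $\mathcal{Z}_c$ is assembled from Theorems \ref{thm:quotient_contact}, \ref{thm:filling_legendrian} and \ref{thm:top glu}, and its landing in $Leg_0^{\mathrm{pr}}$ is taken as a hypothesis. Your extra check that disjoint unions go to the contact product $[(\widetilde M_0\times\widetilde M_1)/\Gm]$ rather than the Cartesian product is a compatibility the paper leaves implicit; note, however, that $F$ is only asserted to be monoidal, so you should not call the composite symmetric monoidal without further input.
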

This evaluates the topological gluing of moduli problems on $d$-manifolds as the cohomological composition of local $\ell$-adic vanishing cycles on the derived contact boundaries, connecting the topological field theory of mapping stacks to the microlocal sheaf theory of contact spaces.

\section*{Acknowledgements}
We would like to thank the Higher Structures group at Middle East Technical University (METU) for their valuable discussions, insights, and continuous support throughout this work.

\bibliographystyle{amsalpha}
\bibliography{refss}

\end{document}